\newtheorem{example}[theorem]{Example}
\newtheorem{property}[theorem]{Property}
\title{A new generalized field of values}
\author{Ricardo Reis da Silva\thanks{Universiteit van Amsterdam, Korteweg-de Vries Instituut for Mathematics, P.O. Box 94248, 1090 GE Amsterdam ({\tt r.j.reisdasilva@uva.nl}).}}
\begin{document}

\maketitle

\begin{abstract}
Given a right eigenvector $x$ and a left eigenvector $y$ associated with the same eigenvalue of a matrix $A$, there is a Hermitian positive definite matrix $H$ for which $y=Hx$. The matrix $H$ defines an inner product and consequently also a field of values. The new generalized field of values is always the convex hull of the eigenvalues of $A$. Moreover, it is equal to the standard field of values when $A$ is normal and is a particular case of the field of values associated with non-standard inner products proposed by Givens. As a consequence, in the same way as with Hermitian matrices, the eigenvalues of non-Hermitian matrices with real spectrum can be characterized in terms of extrema of a corresponding generalized Rayleigh Quotient.
\end{abstract}

\begin{keywords}
field of values, numerical range, Rayleigh quotient
\end{keywords}

\begin{AMS}
15A60, 15A18, 15A42
\end{AMS}

\pagestyle{myheadings}
\thispagestyle{plain}
\markboth{Ricardo Reis da Silva}{A NEW GENERALIZED FIELD OF VALUES}

\section{Introduction}\label{sec:intro}
We propose a new generalized field of values which brings all the properties of the standard field of values to non-Hermitian matrices. For a matrix $A\in\mathbb{C}^{n\times n}$ the (classical) field of values or numerical range of $A$ is the set of complex numbers
\begin{equation}
	F(A)\equiv \{x^{*}Ax:x\in\mathbb{C}^{n},x^{*}x=1\}.
\end{equation}
Alternatively, the field of values of a matrix $A\in\mathbb{C}^{n\times n}$ can be described as the region in the complex plane defined by the range of the Rayleigh Quotient
\begin{equation}
	\rho(x,A)\equiv \frac{x^{*}Ax}{x^{*}x},\hspace{10mm}\forall x\neq 0.
\end{equation}
For Hermitian matrices, field of values and Rayleigh Quotient exhibit very agreeable properties: $F(A)$ is a subset of the real line and the extrema of $F(A)$ coincide with the extrema of the spectrum, $\sigma(A)$, of $A$. Equivalently, the vector $v$ maximizing $\rho(x,A)$ is an eigenvector associated with the largest eigenvalue, $\rho(v,A)$, of the matrix $A$. Therefore, every eigenpair of $A$ is the solution of a maximization-minimization problem in some constraint subspace. Such results are Rayleigh-Ritz and Courant-Fischer's theorems \cite[\S 4.2]{horn_matrix_1985}.

Unfortunately, as the next example shows, for non-Hermitian matrices such pleasant properties no longer hold. Even if all the eigenvalues of $A$ would be real.\\

\begin{example}
Figure \ref{fig:fovalsrandA} and \ref{fig:fovalsA} depict the classical field of values of a Hermitian matrix, $A$, and a non-Hermitian matrix, $B$, with real eigenvalues respectively. Although the eigenvalues of the matrix $B$ lay on the real line such as those of $A$, the extrema of the field of values and of the spectrum no longer coincide. \\

\begin{figure}[ht]
	\begin{minipage}{0.5\linewidth}
		\centering
		\includegraphics[scale=0.4]{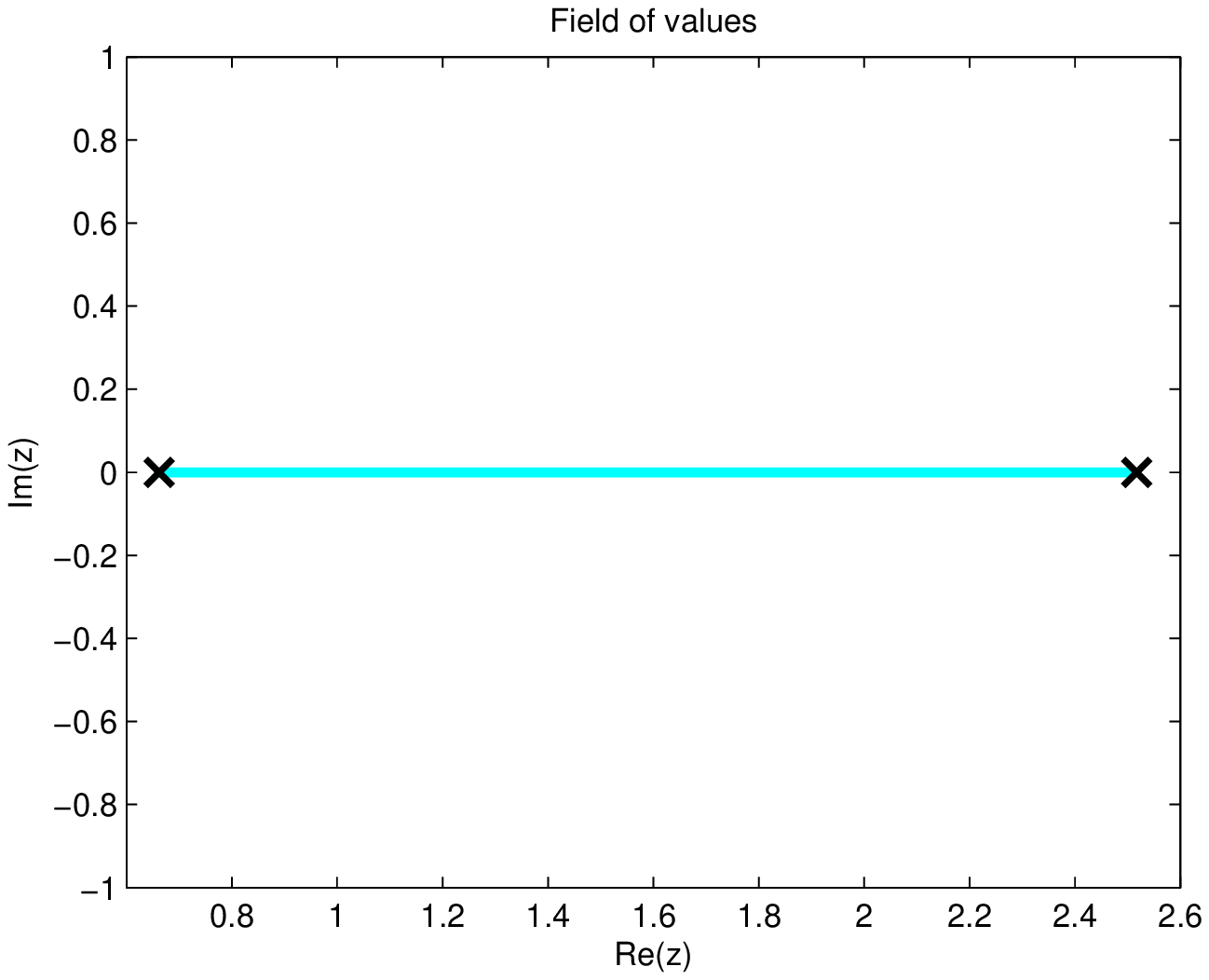}
		\caption{Field of Values of $A$, $F(A)$ }
		\label{fig:fovalsrandA}
	\end{minipage}
	\begin{minipage}{0.5\linewidth}
		\centering
		\includegraphics[scale=0.4]{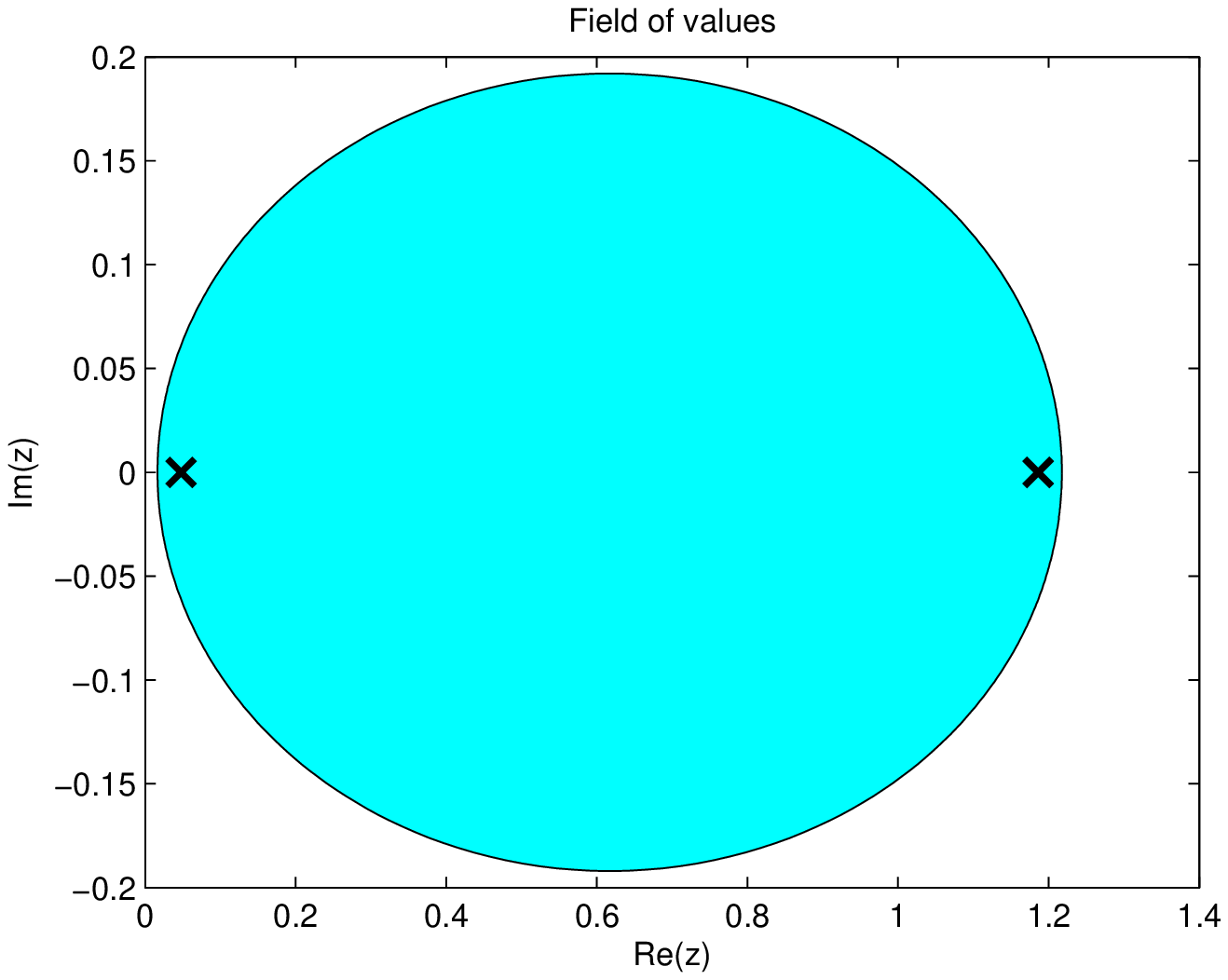}
		\caption{Field of Values of $B$, $F(B)$}
		\label{fig:fovalsA}
	\end{minipage}
\end{figure}
\end{example}

Within the last sixty years several generalizations for the field of values were proposed (see \cite[\S 1.8]{horn_topics_1991} for a more complete list). Each of the generalizations attempted to replicate one or more of the properties of the classical field of values. In 1952, Givens \cite{givens_fields_1952} proposed the field of values of $A\in\mathbb{C}^{n\times n}$ associated with a \emph{generalized inner product}. For any $A\in\mathbb{C}^{n\times n}$ Givens field of values is the set
\begin{equation}
	F_{H}(A)\equiv \{x^{*}HAx:x\in\mathbb{C}^{n\times n}, x^{*}Hx=1, H\mbox{ is Hermitian positive definite}\}.
\end{equation}
Givens intent was to extend $F(A)$'s invariance under unitary similarity transformations to arbitrary similarity transformations. If, for arbitrary nonsingular $V$, $B=VAV^{-1}$ is a similarity transformation of $A$, then $F_{H}(A)=F(B)$ for $H=V^{*}V$. The converse is also true. Givens proves two important statements:\\
\begin{theorem}\label{thm:Givens1}
The intersection of the regions $F_{H}(A)$ for all positive definite matrices $H$ is the minimum convex polygon, $Co(\sigma(A))$ containing all the roots of $A$.
\end{theorem}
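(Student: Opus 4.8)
The plan is to prove the two set inclusions $Co(\sigma(A)) \subseteq \bigcap_H F_H(A)$ and $\bigcap_H F_H(A) \subseteq Co(\sigma(A))$ separately, exploiting throughout the identification $F_H(A) = F(VAV^{-1})$ for $H = V^*V$ recorded just above the statement. This reduces the whole problem to a question about how the classical field of values of $A$ behaves under the full group of similarity transformations.

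For the inclusion $Co(\sigma(A)) \subseteq \bigcap_H F_H(A)$, I would fix any positive definite $H$ and write $H = V^*V$, so that $F_H(A) = F(B)$ with $B = VAV^{-1}$. Since similarity preserves the spectrum, $\sigma(B) = \sigma(A)$, and every eigenvalue of $B$ lies in its classical field of values $F(B)$ (apply the definition to a unit eigenvector). By the Toeplitz--Hausdorff theorem $F(B)$ is convex, so it contains the convex hull $Co(\sigma(B)) = Co(\sigma(A))$. As $H$ was arbitrary, $Co(\sigma(A)) \subseteq F_H(A)$ for every $H$, and the inclusion follows.

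The reverse inclusion is the substantive part and is where I expect the main difficulty. It suffices to show that every point $z \notin Co(\sigma(A))$ is excluded from $F_H(A)$ for at least one $H$; equivalently, that the similarity orbit of $A$ contains matrices whose field of values is arbitrarily close to $Co(\sigma(A))$. Here I would invoke the Schur decomposition $A = UTU^*$ with $U$ unitary and $T = \Lambda + N$ upper triangular, $\Lambda$ diagonal carrying the eigenvalues and $N$ strictly upper triangular. Applying the diagonal scaling $D_\epsilon = \mathrm{diag}(1,\epsilon,\dots,\epsilon^{n-1})$ yields $T_\epsilon \equiv D_\epsilon^{-1} T D_\epsilon$, whose strictly-upper entries carry factors $\epsilon^{j-i} \to 0$; hence $T_\epsilon \to \Lambda$ as $\epsilon \to 0^+$. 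Since $T_\epsilon = V_\epsilon A V_\epsilon^{-1}$ with $V_\epsilon = D_\epsilon^{-1} U^*$, we have $F(T_\epsilon) = F_{H_\epsilon}(A)$ for $H_\epsilon = V_\epsilon^* V_\epsilon$.

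To finish I would combine the continuity of the field of values with the fact that $F(\Lambda) = Co(\sigma(A))$, which holds because $\Lambda$ is normal. Recall that for each direction $\theta$ the support function of $F(M)$ is $h_M(\theta) = \lambda_{\max}\big(\tfrac12(e^{-i\theta}M + e^{i\theta}M^*)\big)$, and that a point $w$ lies in the closed convex set $F(M)$ iff $\mathrm{Re}(e^{-i\theta}w) \le h_M(\theta)$ for all $\theta$. Given $z \notin Co(\sigma(A))$, I would separate it from this closed convex set: there is a direction $\theta_0$ with $\mathrm{Re}(e^{-i\theta_0}z) > h_\Lambda(\theta_0)$. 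Since eigenvalues depend continuously on the matrix entries and $T_\epsilon \to \Lambda$, we have $h_{T_\epsilon}(\theta_0) \to h_\Lambda(\theta_0)$, so for all sufficiently small $\epsilon$ the strict inequality $\mathrm{Re}(e^{-i\theta_0}z) > h_{T_\epsilon}(\theta_0)$ persists. This exhibits $z \notin F(T_\epsilon) = F_{H_\epsilon}(A)$, hence $z \notin \bigcap_H F_H(A)$, completing the reverse inclusion. The genuine obstacle is this separation-and-continuity step; the remaining ingredients---Schur form, Toeplitz--Hausdorff convexity, and continuity of eigenvalues---are standard.
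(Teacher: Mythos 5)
Your proof is correct, but be aware that the paper contains no proof of this statement to compare against: it is quoted verbatim as a result of Givens \cite{givens_fields_1952}, and the paper simply cites it. On its own merits, your argument is sound. The easy inclusion $Co(\sigma(A))\subseteq\bigcap_{H}F_{H}(A)$ uses exactly the identification $F_{H}(A)=F(VAV^{-1})$ for $H=V^{*}V$ that the paper records just before the statement, together with Toeplitz--Hausdorff convexity and invariance of the spectrum under similarity; note every positive definite $H$ factors as $V^{*}V$, so the identification does exhaust all admissible $H$. The reverse inclusion via Schur form, the scaling $D_{\epsilon}=\mathrm{diag}(1,\epsilon,\dots,\epsilon^{n-1})$, and support-function separation is carried out correctly, and in fact only the trivial direction of the support characterization is needed (if $z\in F(T_{\epsilon})$ then $\mathrm{Re}(e^{-i\theta_{0}}z)\le h_{T_{\epsilon}}(\theta_{0})$), so even the closedness of $F(T_{\epsilon})$ is not essential there. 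Two points of comparison with the paper's own machinery are worth making. First, for \emph{nondefective} $A$ the reverse inclusion comes much more cheaply inside the paper's framework: by Property \ref{prop:ConvexHull}, the single matrix $\bar{H}=(VV^{*})^{-1}$ already satisfies $F_{\bar{H}}(A)=G(A)=Co(\sigma(A))$, and the intersection is contained in this one region --- this is precisely the paper's later Corollary. Second, your limiting family $H_{\epsilon}$ is what makes the proof work in the full generality in which Givens stated the theorem: for defective $A$ with a defective eigenvalue on the boundary of the hull, no single $H$ attains the intersection (that is the content of Theorem \ref{thm:Givens2}), so an argument like yours, which never requires attainment or diagonalizability, is genuinely necessary; the paper's nondefective toolkit could not deliver this.
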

\\
\begin{theorem}\label{thm:Givens2}
$F_{H}(A)=Co(\sigma(A))$ for some positive definite hermitian matrix $H$ if and only if the elementary divisors corresponding to roots lying on the boundary of $Co(\sigma(A))$ are simple.
\end{theorem}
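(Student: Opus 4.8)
The plan is to translate the whole statement into the classical field of values and then argue geometrically using the Jordan structure of $A$. Recall from the introduction that $F_H(A)=F(B)$ whenever $B=VAV^{-1}$ and $H=V^{*}V$, and that, conversely, every Hermitian positive definite $H$ factors as $H=V^{*}V$. Since $\sigma(B)=\sigma(A)$, the polygon $Co(\sigma(A))$, the location of roots on its boundary, and the elementary divisors are all invariant under the similarity $A\mapsto B$. By Theorem \ref{thm:Givens1} (or directly, because $F(B)$ is convex and contains $\sigma(B)$) one always has $Co(\sigma(A))\subseteq F_{H}(A)$. Hence the claim reduces to the following: there is a similarity $B$ of $A$ with $F(B)\subseteq Co(\sigma(A))$ if and only if every root on $\partial\, Co(\sigma(A))$ has simple elementary divisors.

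For the sufficiency direction I would construct such a $B$ explicitly. First put $A$ in Jordan form by a similarity, so that $B$ becomes block diagonal, $B=\bigoplus_{\lambda}B_{\lambda}$, with one group of Jordan blocks per eigenvalue. The field of values of a direct sum is the convex hull of the union, $F(\bigoplus_{\lambda}B_{\lambda})=Co(\bigcup_{\lambda}F(B_{\lambda}))$, so it suffices to place every $F(B_{\lambda})$ inside $Co(\sigma(A))$. A further diagonal scaling $\mathrm{diag}(1,\epsilon,\epsilon^{2},\dots)$ on each block replaces the superdiagonal ones by $\epsilon$'s, shrinking $F(B_{\lambda})$ to the disk $\lambda+\epsilon\,F(N)$ about $\lambda$, where $N$ is the nilpotent shift. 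For a root $\lambda$ in the interior of $Co(\sigma(A))$ there is positive distance to the boundary, so a small enough $\epsilon$ keeps this disk inside; for a boundary root the hypothesis forces the block to be $\lambda I$, whose field of values is the single point $\{\lambda\}\subseteq Co(\sigma(A))$. Taking all scalings small enough yields $F(B)\subseteq Co(\sigma(A))$, and combined with the reverse inclusion, equality.

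For necessity I would argue by contraposition: assume some root $\lambda_{0}\in\partial\, Co(\sigma(A))$ has a nonsimple elementary divisor and exhibit, for \emph{every} similarity $B$, a unit vector whose Rayleigh quotient escapes $Co(\sigma(A))$. After a translation and rotation (which move $F(B)$ and $Co(\sigma(B))$ rigidly and preserve the Jordan data) I may assume $\lambda_{0}=0$ and that the supporting line of the convex set at $0$ is the imaginary axis, with $Co(\sigma(B))\subseteq\{\mathrm{Re}\,z\le 0\}$. A nonsimple elementary divisor at $0$ yields a Jordan chain $Bv=0$, $Bu=v$ with $v\neq 0$. For $y=\epsilon u+v$ a direct computation gives $y^{*}By=\epsilon^{2}(u^{*}v)+\epsilon\,\|v\|^{2}$, so $\mathrm{Re}(y^{*}By)=\epsilon\,\|v\|^{2}+O(\epsilon^{2})>0$ for small $\epsilon>0$; after normalization this produces a point of $F(B)$ with strictly positive real part, hence outside $Co(\sigma(B))$. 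Thus $F(B)\neq Co(\sigma(B))$ for every $B$, that is, for every Hermitian positive definite $H$.

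The step I expect to be the main obstacle is this last one, and specifically the treatment of boundary roots that are \emph{not} vertices of the polygon. For a corner one could invoke the classical fact that sharp points of a field of values are normal (hence semisimple) eigenvalues, but that argument says nothing about roots sitting in the relative interior of an edge. The virtue of the explicit Jordan-chain vector above is that it only uses a supporting line at $\lambda_{0}$, which exists at every boundary point, corner or not, and so disposes of vertices and edge points uniformly. A minor point to verify carefully is that the scaling in the sufficiency direction can be taken uniformly small across all interior blocks while keeping each shrunken disk strictly inside $Co(\sigma(A))$.
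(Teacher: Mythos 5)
Your proof is correct, but note that there is nothing in the paper to compare it against: the paper states this result verbatim as Givens' theorem and cites \cite{givens_fields_1952} without reproducing any argument. Your route is essentially Givens' classical one: reduce to the ordinary field of values via $H=V^{*}V$ and $F_{H}(A)=F(VAV^{-1})$ (both directions of this correspondence are stated in the paper's introduction), then for sufficiency use the Jordan form with the diagonal scaling $\mathrm{diag}(1,\epsilon,\epsilon^{2},\dots)$, the direct-sum rule $F(\bigoplus_{\lambda}B_{\lambda})=Co(\bigcup_{\lambda}F(B_{\lambda}))$, and the fact that $F(N_{k})$ is a bounded disk, and for necessity use a Jordan chain played against a supporting line. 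The key computation checks out: with $Bv=0$, $Bu=v$ and $y=\epsilon u+v$ one gets $By=\epsilon v$, hence $y^{*}By=\epsilon^{2}u^{*}v+\epsilon\|v\|^{2}$, whose real part is positive for small $\epsilon>0$, giving a point of $F(B)$ strictly outside the half-plane containing $Co(\sigma(B))$; since translation, rotation, and similarity preserve all the relevant data, this kills every admissible $H$ at once. Your observation that the supporting-line argument handles boundary roots lying in the relative interior of an edge (where the ``sharp points are normal eigenvalues'' shortcut says nothing) is exactly what makes the necessity direction uniform. Two minor points worth making explicit if you write this up: the uniform choice of $\epsilon$ is unproblematic because there are finitely many blocks, and in the degenerate cases where $Co(\sigma(A))$ is a segment or a point the interior argument is vacuous but the hypothesis then forces $B$ to be diagonalizable, so the sufficiency direction still closes.
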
\\

When $A$ is normal $F_{H}(A)=F(A)=Co(\sigma(A))$, the convex hull of the spectrum of $A$. It is unclear, however, if for arbitrary $A$, Givens knew which $H$ (if any) satisfies $F_{H}(A)=Co(\sigma(A))$. 

A few years later, Bauer \cite{bauer_field_1962} proposed a new generalization. Bauer extended the original formulation of the field of values to norms which need not be associated with inner products
\begin{equation}
	F_{\|\cdot\|}(A)\equiv \{y^{*}Ax:x,y\in\mathbb{C}^{n}\mbox{ and }\|y\|^{D}=\|x\|=y^{*}x=1\}
\end{equation}
where $\|y\|^{D}$ stands for the dual (vector) norm (see \cite{nirschl_bauer_1964}). Bauer's generalized field of values makes use of different (though related) vectors at the right and left sides of $A$. Those vectors are dual pairs with respect to the vector norm $\|\cdot\|$. Moreover, $F_{\|\cdot\|}(A)$ depends only on the norm and $A$ and not on an inner product. However, and although Bauer's generalized field of values always contains $\sigma(A)$, it is not always convex \cite{nirschl_bauer_1964, zenger_convexity_1968}. Moreover, according to the authors in \cite{nirschl_bauer_1964} the field of values from Givens is a special case of Bauer's field of values.

A more recent generalization is the \emph{$q$-field of values} \cite{marcus_constrained_1977} 
\begin{equation}
	F_{q}(A)\equiv  \{y^{*}Ax:x,y\in\mathbb{C}^{n}, y^{*}y=x^{*}x=1,\mbox{ and }y^{*}x=q\}.
\end{equation}
Also for $F_{q}(A)$ different vectors $x$ and $y$ are used for the inner product $\langle Ax,y\rangle$. They must, however, satisfy an extra constraint $y^{*}x=q$. The convexity property holds for $q\in[0,1]$ and $A\in\mathbb{C}^{n\times n}$ with $n\geq 2$ \cite{horn_topics_1991}. Finally, if $q=1$, the $q$-field of values reduces to the standard field of values.

\subsection{Notation and Outline}

Although most of the notation we use is considered standard, we believe to be useful to clarify some assumptions. We represent eigenvalues
by the Greek letters $\lambda$ and choose to label them in non-decreasing order of magnitude:
$\min{\lambda}=\lambda_{1}\leq\lambda_{2}\leq...\leq\lambda_{n-1}\leq\lambda_{n}=\max{\lambda}$. The spectrum of a matrix $A$, the set of the eigenvalues of $A$, is denoted by $\sigma(A)$. Letters at the end of the alphabet represent vectors and these will always be column vectors. Lower case Latin letters $i$ and $j$ denote indices. The conjugate transpose of a matrix $X$ is denoted by $X^{*}$, also for $X$ real. With
$I_{n}$ we mean the $n\times n$ identity matrix and by $e_{j}$ its  $j$th column. To minimize clutter we use $A^{-*}$, if necessary, to mean $(A^{*})^{-1}=(A^{-1})^{*}$ while $\langle x ,y\rangle$ will denote the standard inner product between vectors $x,y\in\mathbb{C}^{n}$.

In \S \ref{sec:GTFV} we define the new generalized field of values giving emphasis to the relations between left and right eigenvectors of non-Hermitian matrices. In \S \ref{sec:Properties} we prove important properties of the generalized two-sided field of values and show how, for particular types of matrices, it naturally reduces to the classical field of values and to some of the early generalized approaches. Section \ref{sec:TSRQ} describes the two-sided Rayleigh Quotient illustrating some of its properties and relating it with the newly defined generalized two-sided field of values. Here we show how the properties of the classic Rayleigh Quotient carryover to the generalized field of values once the correct inner product is chosen. Finally, in that same section, we show how using the new definitions the extremal properties of the standard Rayleigh Quotient known for Hermitian matrices can be extended to the non-Hermitian case for matrices with real eigenvalues.

\section{A new generalized field of values}	\label{sec:GTFV}

If $A$ is a Hermitian matrix and $\lambda\in\mathbb{R}$ an eigenvalue of $A$, the set of vectors $v$ satisfying $Av=\lambda v$ is equal to the set of vectors $w$ satisfying $w^{*}A=\lambda w^{*}$. These are the right and left eigenvectors associated with the eigenvalue $\lambda$. If $A$ is non-Hermitian, however, that is not necessarily the case. None of the generalizations of the field of values discussed in section \S \ref{sec:intro} focuses on capturing the dynamics of left and right eigenvectors associated with the same eigenvalue of a non-Hermitian matrix. Assume $A$ is nondefective. Then there exists a nonsingular matrix $V$ whose columns are right eigenvectors of $A$ and that satisfies
\begin{equation}\nonumber
	V^{-1}A=\Lambda V^{-1}\hspace{10mm}\mbox{and}\hspace{10mm}AV=V\Lambda.
\end{equation}
The rows of $V^{-1}$ are the left eigenvectors of $A$ while $\Lambda$ is the diagonal matrix of the eigenvalues of $A$. A crucial observation is the following
\begin{lemma}\label{lem:rleigv}
	Let $v_{r},v_{l}\in\mathbb{C}^{n\times n}$ be the right and left eigenvectors corresponding to the same eigenvalue $\lambda_{i}$ of a nondefective matrix $A\in\mathbb{C}^{n\times n}$. Let $A=V\Lambda V^{-1}$ be  a diagonalization of $A$ and denote by $\Lambda$ the diagonal matrix of the eigenvalues. Take $V$ to be a matrix whose columns are eigenvectors of $A$ scaled appropriately. Then $v_{r}$ and $v_{l}$ satisfy
		\begin{equation}\nonumber	
			 V^{*}v_{l}=V^{-1}v_{r}=e_{i}\hspace{5mm}\mbox{or equivalently}\hspace{5mm}v_{r}=VV^{*}v_{l}=Ve_{i}.
		\end{equation}
\end{lemma}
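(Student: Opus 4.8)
The plan is to read both eigenvectors off directly from the factorization $A=V\Lambda V^{-1}$ and then reduce each claimed identity to a cancellation of $V$ against $V^{-1}$ (respectively $V^{*}$ against $V^{-*}$); no analytic ingredient is needed, only careful bookkeeping of the normalization. First I would identify the right eigenvector. From $AV=V\Lambda$ the $i$th column $Ve_{i}$ satisfies $A(Ve_{i})=\lambda_{i}(Ve_{i})$, so it is a right eigenvector for $\lambda_{i}$. Fixing the scaling of $v_{r}$ to be exactly this column, $v_{r}=Ve_{i}$, gives at once $V^{-1}v_{r}=V^{-1}Ve_{i}=e_{i}$, which is half of the first displayed equality together with the relation $v_{r}=Ve_{i}$.

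Next I would identify the left eigenvector from the dual factorization $V^{-1}A=\Lambda V^{-1}$. Reading off the $i$th row gives $e_{i}^{*}V^{-1}A=\lambda_{i}\,e_{i}^{*}V^{-1}$, so $e_{i}^{*}V^{-1}$ is a left eigenvector in row form; equivalently its conjugate transpose $V^{-*}e_{i}$ satisfies $v_{l}^{*}A=\lambda_{i}v_{l}^{*}$. Fixing the scaling of $v_{l}$ to be exactly $v_{l}=V^{-*}e_{i}$ then yields $V^{*}v_{l}=V^{*}V^{-*}e_{i}=e_{i}$, completing the first displayed equality. The equivalent form follows immediately, since $VV^{*}v_{l}=Ve_{i}=v_{r}$, so the whole chain $v_{r}=VV^{*}v_{l}=Ve_{i}$ holds.

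The one point that needs care, and the only genuine content of the statement, is the normalization encoded in the phrase \emph{scaled appropriately}. Right and left eigenvectors are determined only up to a nonzero scalar, so the identities above hold not for arbitrary representatives but precisely for the ones singled out by the columns of $V$ and of $V^{-*}$. I would make this explicit by observing that the two choices are mutually consistent, namely biorthonormal, since $v_{l}^{*}v_{r}=e_{i}^{*}V^{-1}Ve_{i}=e_{i}^{*}e_{i}=1$. Stating this normalization removes the scaling ambiguity and fixes $v_{l}$ once $v_{r}$ (equivalently $V$) has been chosen; with it in place the proof is pure cancellation and nothing further is required.
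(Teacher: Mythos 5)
Your proof is correct and follows essentially the same route as the paper's: identify $v_{r}=Ve_{i}$ and $v_{l}=V^{-*}e_{i}$ from the two forms $AV=V\Lambda$ and $V^{-1}A=\Lambda V^{-1}$ of the diagonalization, then reduce every identity to cancellation of $V$ against $V^{-1}$ (resp.\ $V^{*}$ against $V^{-*}$). Your treatment is in fact somewhat more explicit than the paper's terse argument, particularly in pinning down the meaning of ``scaled appropriately'' via the biorthonormality $v_{l}^{*}v_{r}=1$.
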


\begin{proof}
	The nondefectiveness of $A$ guarantees the existence of a nonsingular matrix $V$ for which $A=V\Lambda V^{-1}$ where $\Lambda=diag(\lambda_{1},\ldots,\lambda_{n})$. As a consequence, the matrix $\Lambda$ is complex symmetric. Therefore, for $v_{r}$ and $v_{l}$ as above, 
$(v_{l}^{*}V)^{*}=V^{-1}v_{r}=e_{i}$ or equivalently, $v_{l}=(VV^{*})^{-1}v_{r}=V^{-*}e_{i}$ or yet $v_{r}=VV^{*}v_{l}=Ve_{i}$.
\end{proof}\\

The Hermitian case ($A^{*}=A$), emerges as a particular occurrence of the previous lemma. The matrix $V$ is then unitary, i.e. $V^{*}=V^{-1}$ and as a consequence the statement shortens to $v_{r}=v_{l}=Ve_{i}$. 

Similar to the case of Bauer's and of the $q$-field of values, so does our generalized two-sided field of values uses different vectors on the right and left of $A$. Those vectors $y$ and $x$ now form a dual pair with respect to an eigenvalue of $A$. The definition that follows introduces the generalized two-sided field of values.

\begin{definition}[Generalized two-sided field of values]
For a nondefective matrix $A=V\Lambda V^{-1}$, the generalized two-sided field of values is the set of complex numbers
\begin{equation}\label{def:Gtsfv}
	G(A)\equiv \{y^{*}Ax:x,y\in\mathbb{C}^{n},y=(VV^{*})^{-1}x\mbox{ and }y^{*}x=1\}.
\end{equation}
\end{definition}

\section{Properties of the generalized two-sided field of values}\label{sec:Properties}

We will assume that $A$ can be diagonalized as $A=V\Lambda V^{-1}$ and that $G(A)$ is defined as in (\ref{def:Gtsfv}). The first two properties to come are standard properties for fields of values (see also \cite{johnson_functional_1976}). Further on we introduce some particular characteristics of $G(A)$.

\begin{property}
For any complex number $\alpha$,
	\begin{equation}\nonumber
		G(A+\alpha I)=G(A)+\alpha.
	\end{equation}
\end{property}
{\em Proof}.
That the basis of eigenvectors of $A$ and $A+\alpha I$ is the same follows from $V^{-1}(A+\alpha I)V=\Lambda+\alpha I=D$. Therefore,
	\begin{eqnarray}
		G(A+\alpha I)&=&\{y^{*}(A+\alpha I)x:y=(VV^{*})^{-1}x, y^{*}x=1\}\nonumber \\
					 &=&\{y^{*}Ax+\alpha y^{*}x:y=(VV^{*})^{-1}x, y^{*}x=1\} \nonumber \\
					 &=&\{y^{*}Ax:y=(VV^{*})^{-1}x, y^{*}x=1\}+\alpha=G(A)+\alpha.\nonumber\qquad\endproof
	\end{eqnarray}

\begin{property}
	For any nonzero complex number $\alpha$,
	\begin{equation}\nonumber
		G(\alpha A)=\alpha G(A).
	\end{equation}
\end{property}
{\em Proof}.
	\begin{eqnarray}
		G(\alpha A)&=&\{y^{*}(\alpha A)x:y=(VV^{*})^{-1}x, y^{*}x=1\}\nonumber \\
				   &=&\{\alpha y^{*}Ax:y=(VV^{*})^{-1}x, y^{*}x=1\} \nonumber \\
				   &=&\alpha\{y^{*}Ax:y=(VV^{*})^{-1}x, y^{*}x=1\}=\alpha G(A).\nonumber \qquad\endproof
	\end{eqnarray}

\begin{property} 
For all nondefective matrices $A\in\mathbb{C}^{n\times n}$,
\begin{equation}\nonumber
	G(A)=F_{H}(A) 
\end{equation}
with $H=(VV^{*})^{-1}$.
\end{property}
{\em Proof}.
	Define $H=(VV^{*})^{-1}$. The matrix $H^{-1}=VV^{*}$ is positive definite and therefore so is $H$. Consequently,
	\begin{equation}\nonumber
		y^{*}Ax=x^{*}(VV^{*})^{-1}Ax=x^{*}HAx\hspace{10mm}\mbox{and}\hspace{10mm}1=y^{*}x=x^{*}Hx.\qquad\endproof
	\end{equation}

As such, $G(A)$ is a particular case of $F_{H}(A)$. However, as the next properties show it possesses very agreeable characteristics.
\begin{property}\label{prop:equivalence}
	For all nondefective $A\in\mathbb{C}^{n\times n}$ of the form $A=V\Lambda V^{-1}$,
	\begin{equation}\nonumber
		G(A)=F(\Lambda).
	\end{equation}
\end{property}

{\em Proof}.
	Let $z=V^{-1}x$. Then,
	\begin{equation}\nonumber
		y^{*}Ax=xV^{-*}V^{-1}Ax=z^{*}V^{-1}AVz=z^{*}\Lambda z \hspace{10mm}\mbox{and}\hspace{10mm}1=y^{*}x=z^{*}z.\qquad \endproof
	\end{equation}

\begin{property}\label{prop:realpart}
	Let $H(A)=\frac{1}{2}(A+A^{*})$,
	\begin{equation}\nonumber
		G(H(A))=\Re F(A).
	\end{equation}
\end{property}
\begin{proof}
	Because $H(A)$ is normal it follows from Property \ref{prop:normality} that
	\begin{equation}\nonumber
		G(H(A))=F(H(A))=\Re F(A),
	\end{equation}
	where the last equality follows from the properties of the classical field of values.
\end{proof}
Property \ref{prop:realpart} contrasts with the equivalent one for the classical field of values. Unlike the latter, the projection of $G(H(A))$ onto the real axis for non-Hermitian matrices is not orthogonal but oblique and so $G(H(A))\neq \Re G(A)$.

It follows from Property \ref{prop:equivalence} and from the properties of the classical field of values that
\begin{property}\label{prop:Comp}
	$G(A)$ is a compact and convex set.
\end{property}

Not only is the set compact and convex as it also possesses a very important property related to the spectrum of $A$.
\begin{property}\label{prop:ConvexHull}
For all nondefective $A\in\mathbb{C}^{n\times n}$
	\begin{equation}\nonumber
		G(A)=Co(\sigma(A)).
	\end{equation}
where $Co(\sigma(A))$ denotes the convex hull of the spectrum of $A$.
\end{property}
\begin{proof}
	If $A$ is nondefective, there exists a nonsingular matrix $V$ for which $A=V^{-1}\Lambda V$, where $\Lambda =diag(\lambda_{1},\ldots,\lambda_{n})$ is the diagonal matrix of eigenvalues of $A$. By definition $G(A)=F(\Lambda )$ where the field of values of $\Lambda $ is the set of all convex combinations of the diagonal elements of $\Lambda $:
	\begin{equation}\nonumber
		z^{*}\Lambda z=\sum_{i=1}^{n}|z_{i}|^{2}\lambda_{i}\hspace{10mm}\mbox{where}\hspace{10mm}\sum_{i}^{n}|z_{i}|^{2}=1.
	\end{equation}
	The proof follows from the definition of convex hull of a set $S$ as the set of all convex combinations of finitely many points of $S$.
\end{proof}
\begin{property}\label{prop:normality}
	For all normal $A\in\mathbb{C}^{n\times n}$,
	\begin{equation}\nonumber
		G(A)=F(A).
	\end{equation}
\end{property}
\begin{proof}
As a consequence of the normality of $A$ the matrix $V$ is unitary, i.e. $V^{-1}=V^{*}$. Therefore,
\begin{equation}\nonumber
	G(A)=F(V^{-1}AV)=F(V^{*}AV)=F(A)
\end{equation}
where the last equality results from the unitary invariance property of the classical field of values (\cite[Property 1.2.8]{horn_topics_1991}).
\end{proof}

In summary, $G(A)$ is, for any nondefective matrix, always convex and always the convex hull of the eigenvalues of $A$. This gives the following corollary to Theorem \ref{thm:Givens1}:
\begin{corollary}
	When $V$ is the matrix of eigenvectors of $A$ and $\bar{H}=(VV^{*})^{-1}$, the region $F_{\bar{H}}(A)$ is the intersection of the regions $F_{H}(A)$ over all positive definite $H$. 
\end{corollary}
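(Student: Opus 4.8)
The plan is to prove the corollary by reducing it to two facts already in hand: the explicit evaluation $F_{\bar H}(A)=Co(\sigma(A))$, and Givens' Theorem~\ref{thm:Givens1}, which identifies the intersection of the regions $F_{H}(A)$ over all positive definite $H$ with $Co(\sigma(A))$. Since both the candidate region and the intersection turn out to equal the same convex hull, the corollary follows by matching them.

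First I would check that $\bar H=(VV^{*})^{-1}$ is an admissible weight, i.e. that it is Hermitian positive definite; this is exactly the observation already used in the proof of the property $G(A)=F_{H}(A)$, namely that $VV^{*}$ is positive definite and hence so is its inverse. Thus $\bar H$ is one of the matrices $H$ ranging in the intersection, which gives at once the trivial inclusion $\bigcap_{H}F_{H}(A)\subseteq F_{\bar H}(A)$.

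Next I would establish the reverse inclusion. Combining the property $G(A)=F_{\bar H}(A)$ with Property~\ref{prop:ConvexHull}, which states $G(A)=Co(\sigma(A))$, yields $F_{\bar H}(A)=Co(\sigma(A))$. On the other hand, Theorem~\ref{thm:Givens1} gives $\bigcap_{H}F_{H}(A)=Co(\sigma(A))$, and since the intersection is contained in every $F_{H}(A)$, every such region contains $Co(\sigma(A))$. In particular $F_{\bar H}(A)=Co(\sigma(A))\subseteq F_{H}(A)$ for all $H$, whence $F_{\bar H}(A)\subseteq\bigcap_{H}F_{H}(A)$. The two inclusions together give $F_{\bar H}(A)=\bigcap_{H}F_{H}(A)$.

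I do not expect a genuine obstacle here: the analytic content is carried entirely by Givens' Theorem~\ref{thm:Givens1} and by the earlier identification $G(A)=Co(\sigma(A))$, so the corollary is really the statement that $\bar H$ is an explicit witness for the optimal weight whose existence Givens only guaranteed abstractly (cf. Theorem~\ref{thm:Givens2}, whose hypothesis that the elementary divisors on the boundary be simple is automatic for nondefective $A$). The only point requiring minimal care is to argue both set inclusions rather than a single equality, since Theorem~\ref{thm:Givens1} is phrased for the intersection and not for an individual region.
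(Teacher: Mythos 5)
Your proposal is correct and takes essentially the same route as the paper: the paper presents this corollary as an immediate consequence of $F_{\bar H}(A)=G(A)=Co(\sigma(A))$ (Properties in \S\ref{sec:Properties}) combined with Givens' Theorem~\ref{thm:Givens1}, which is exactly your argument. Your two-inclusion elaboration is slightly more than needed---once both sets are identified with $Co(\sigma(A))$, equality follows by transitivity---but this is a matter of presentation, not substance.
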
\\

The matrix $\bar{H}=(VV^{*})^{-1}$, however, is not the only matrix $H$ for which $F_{H}=Co(\sigma(A))$ as the next example shows. 
\begin{example}
Let $A$ be a nondefective matrix such that both $A=V\Lambda V^{-1}$ and $V$ can be partitioned as follows 
\begin{equation}\nonumber
	A=\left[\begin{array}{cc}
		\Lambda_{1} & \\	
			  & A_{2}
	\end{array}\right]\hspace{10mm}\mbox{and}\hspace{10mm}V=\left[\begin{array}{cc}
		I 	& \\	
			& V_{2}
	\end{array}\right]
\end{equation}
where $\Lambda_{1}$ is diagonal and $F(A_{2})\subset F(\Lambda_{1})$. Then for both $H_{1}=I$ and $H_{2}=(VV^{*})^{-1}$ ,$F_{H_{1}}=F_{H_{2}}=Co(\sigma(A))$.
\end{example}

{\em Note:} So far we have assumed that the matrix $A$ must be nondefective. However, as Theorem \ref{thm:Givens2} from Givens shows, this requirement can be made less tight. In fact, let $A$ be any (square) matrix whose elementary divisors corresponding to eigenvalues lying on the boundary of $Co(\sigma(A))$ are simple. Let $J$ be the bidiagonal matrix of the Jordan normal form of $A$ and partition it as follows
	\begin{equation}\nonumber
		J=\left[\begin{array}{cc}
			\Lambda  & \\
			  & T	
		\end{array}\right].
	\end{equation}
 Here, $\Lambda $ is the (diagonal) matrix of eigenvalues of $A$ located on the boundary of $Co(\sigma(A))$ and $T$ the bidiagonal matrix containing the remaining roots. Partition $W$ in a similar fashion as $W=[W_{1}\hspace{3mm} W_{2}]$ where $W_{1}$ contains the set of linear independent eigenvectors associated with the eigenvalues in $\Lambda $ and $W_{2}$ the set of generalized eigenvectors associated with the eigenvalues on the diagonal of $T$. Then, 
 \begin{equation}\nonumber
	G(A)=\{y^{*}Ax:x,y\in\mathbb{C}^{n},y=(WW^{*})^{-1}x,y^{*}x=1\}.
 \end{equation}

To end this section we prove a result on the location of $G(A)$ in the complex plane when $A$ is positive definite. First, however, we need to extend the definition of positive definite matrices. It is widely accepted that a positive definite matrix is a \emph{Hermitian} matrix, $A\in\mathbb{C}^{n\times n}$, for which
\begin{equation}\nonumber
	x^{*}Ax>0,\hspace{10mm}\mbox{for all }0\neq x\in\mathbb{C}^{n}.
\end{equation}
The previous definition neglects non-Hermitian matrices with (real) positive eigenvalues. Furthermore, there is no agreement on the literature on what the proper extension to the non-Hermitian case should be. We wish to contribute to the discussion by proposing a definition consistent with the earlier generalization of the field of values. In this way,
\begin{definition}[Positive definite matrix]\label{def:PDM}
	A nondefective matrix $A\in\mathbb{C}^{n\times n}$ diagonalizable as $A=V\Lambda V^{-1}$ is said to be positive semidefinite if 
	\begin{equation}\label{eq:PSDM}
		y^{*}Ax\geq 0,\hspace{10mm}\mbox{for all } 0 \neq x\in\mathbb{C}^{n}\hspace{5mm}\mbox{and}\hspace{5mm}(VV^{*})y=x,
	\end{equation}
	and positive definite if, in addition,
	\begin{equation}\label{eq:PDM}
		y^{*}Ax> 0,\hspace{10mm}\mbox{for all } 0 \neq x\in\mathbb{C}^{n}\hspace{5mm}\mbox{and}\hspace{5mm}(VV^{*})y=x.
	\end{equation}
\end{definition}
This definition is consistent with the generalization of the inner product. This is due to the fact that Equations (\ref{eq:PSDM}) and (\ref{eq:PDM}) are equivalent to
\begin{equation}
	\langle Ax,x \rangle_{H}\geq 0\hspace{10mm}\mbox{and}\hspace{10mm} \langle Ax,x \rangle_{H}> 0
\end{equation}
respectively, in the inner product defined by the matrix $H=(VV^{*})^{-1}$, the $H$-inner product. To satisfy the alternative characterization that $A$'s eigenvalues are real and positive, we must have $H=(VV^{*})^{-1}$. For normal matrices $H=I$, thus reverting to the standard definition. We now enunciate the property
\begin{property}
	If $A$ is positive definite then $G(A)\subset \{z:Re\hspace{2mm} z>0\}$.
\end{property}
\begin{proof}
	If $A$ is positive definite in the sense of the definition above then $A$ is either Hermitian positive definite or has real positive eigenvalues. That the property is true for the Hermitian case follows from Property \ref{prop:normality}. For non-Hermitian matrices the result is given by Property \ref{prop:ConvexHull}.
\end{proof}

\begin{example}
Figures \ref{fig:fovalsA2} and \ref{fig:fovalsrandA2} show the standard, $F(A)$, and the Generalized two-sided field of values of a non-Hermitian matrix $A$ with real eigenvalues. Figures \ref{fig:fovalsA3} and \ref{fig:fovalsrandA3} represent a similar situation for a random matrix $B\in\mathbb{R}^{n\times n}$ with some complex eigenvalues.
\begin{figure}
	\begin{minipage}{0.5\linewidth}
		\centering
		\includegraphics[scale=0.4]{V2FovalsA.eps}
		\caption{Standard FoV, $F(A)$}
		\label{fig:fovalsA2}
	\end{minipage}
	\begin{minipage}{0.5\linewidth}
		\centering
		\includegraphics[scale=0.4]{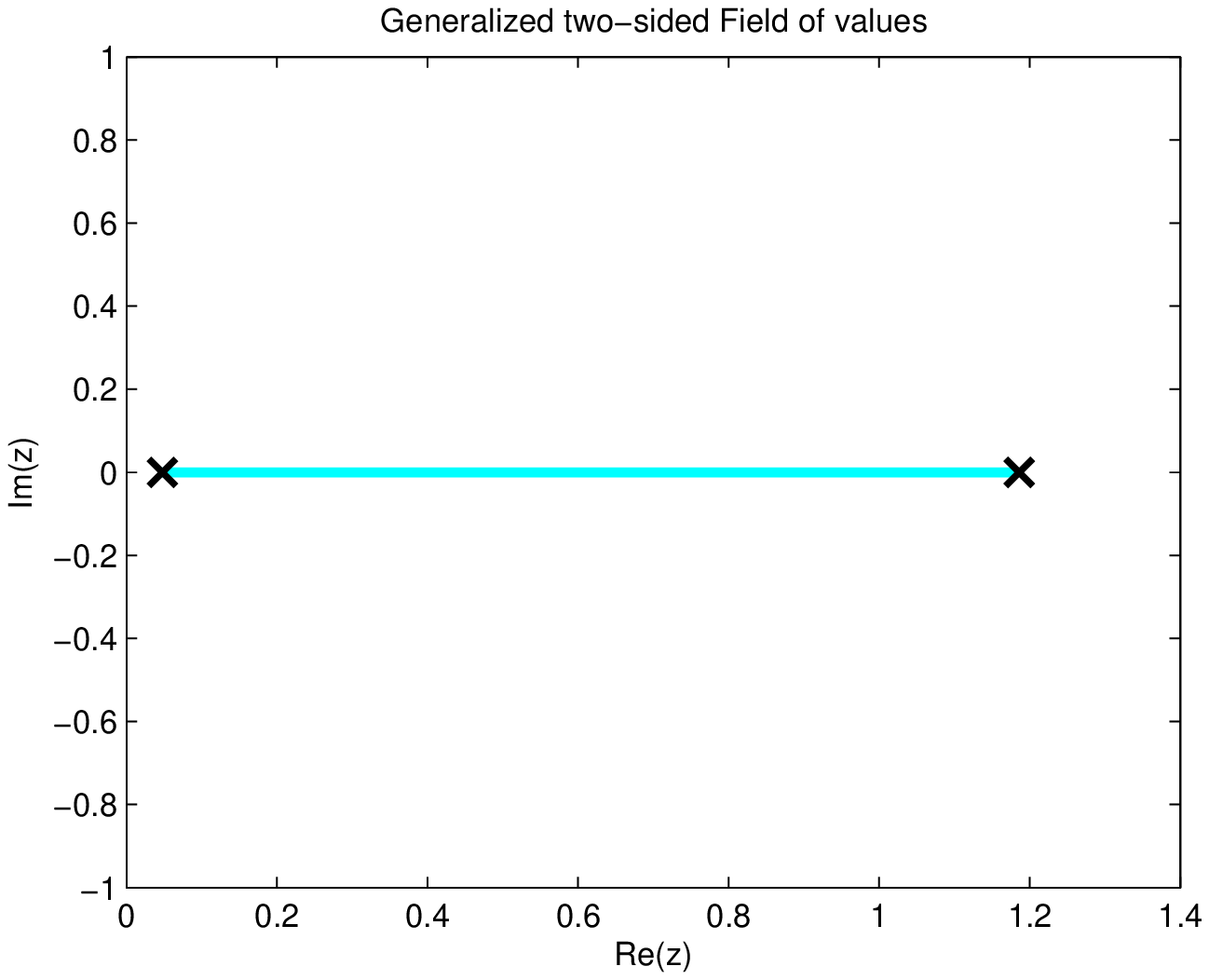}
		\caption{Gen. two-sided FoV, $G(A)$}
		\label{fig:fovalsrandA2}
	\end{minipage}
\end{figure}

\begin{figure}
	\begin{minipage}{0.5\linewidth}
		\centering
		\includegraphics[scale=0.4]{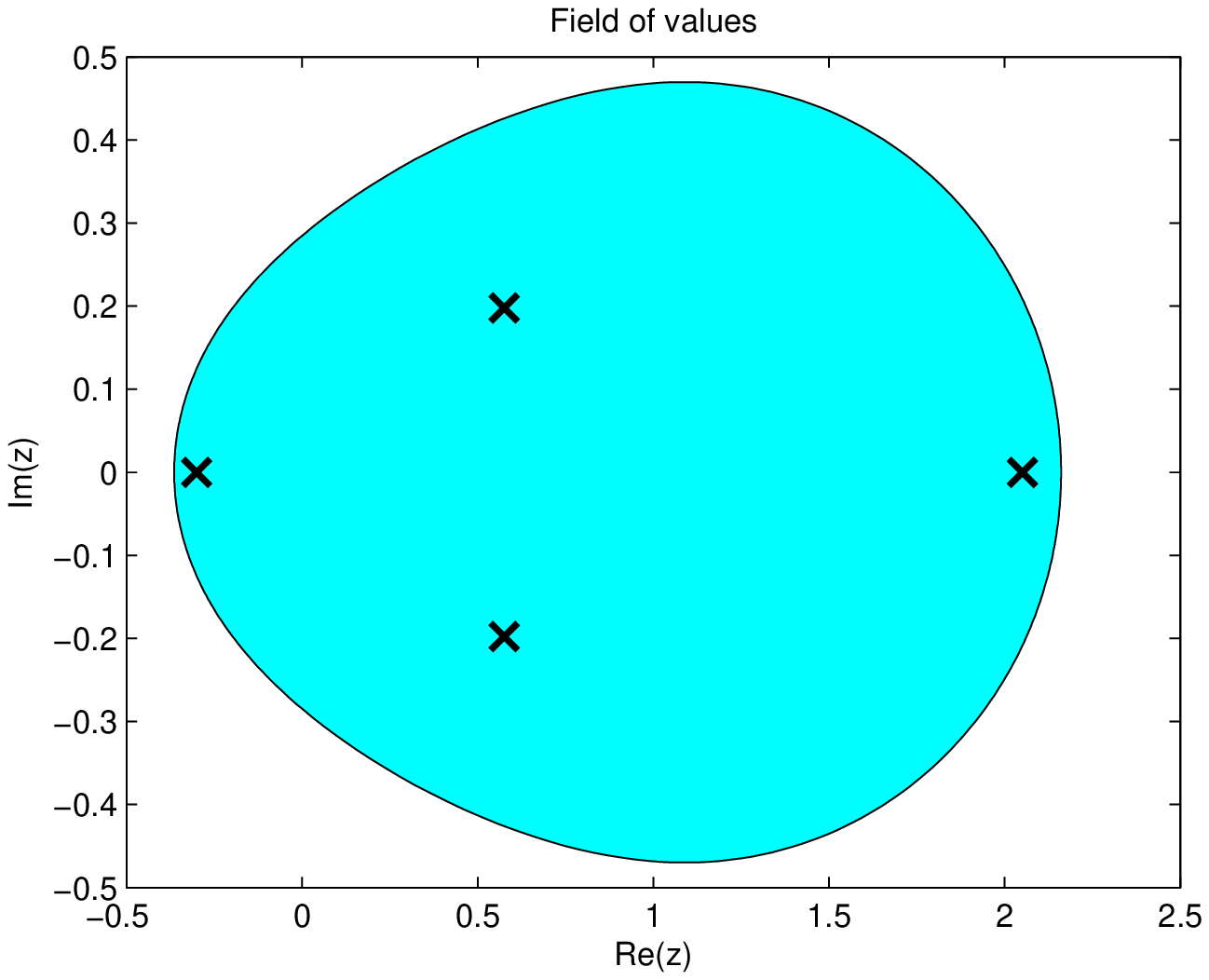}
		\caption{Standard FoV, $F(B)$}
		\label{fig:fovalsA3}
	\end{minipage}
	\begin{minipage}{0.5\linewidth}
		\centering
		\includegraphics[scale=0.4]{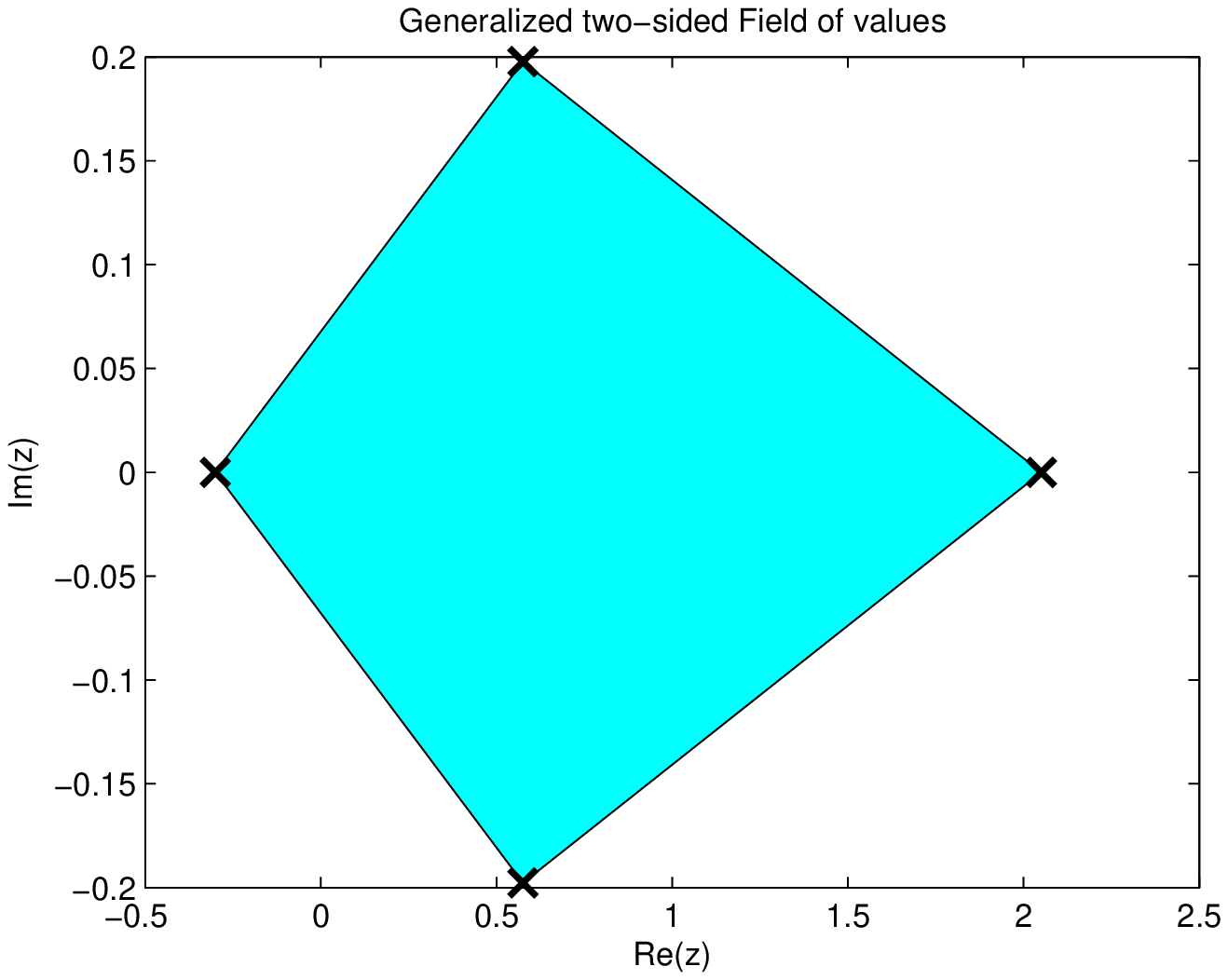}
		\caption{Gen. two-sided FoV, $G(B)$}
		\label{fig:fovalsrandA3}
	\end{minipage}
\end{figure}	
\end{example}

\section{Generalized two-sided Rayleigh Quotient}\label{sec:TSRQ}

Such as the standard field of values is related to the standard Rayleigh Quotient so the generalized field of values is connected with a generalized Rayleigh Quotient. Recall that given a Hermitian matrix $A$, and an $n$-dimensional vector $x$ the standard Rayleigh Quotient, $\rho(x,A)$ is the function
\begin{equation}\nonumber
	\rho(x,A)=\frac{x^{*}Ax}{x^{*}x},\hspace{10mm}x\neq 0.
\end{equation}
For the nonsymmetric case, the situation is more delicate. Instead of the quadratic form, the loss of symmetry requires us to handle a bilinear one. An intuitive generalization would be
\begin{equation}\label{eq:oldTSRQ}
	\tilde{\rho}(y,x,A)=\frac{y^{*}Ax}{y^{*}x},\hspace{10mm}y^{*}x\neq 0
\end{equation}
which is consistent with the ones given in \cite[In particular part III]{ostrowski_convergence_1957} and follow-ups such as \cite{ parlett_rayleigh_1974,hochstenbach_two-sided_2003}. A consequence of the loss of symmetry, however, is that with the generalization just defined and even if all the eigenvalues of $A$ would be real, $\tilde{\rho}(y,x,A)$ is no longer maximized at the largest eigenpair of $A$ (or, for what is worth, minimized at the smallest). The correct generalization requires an extra constraint.
\begin{definition}[Generalized two-sided Rayleigh Quotient]
	Assume the matrices $A,M\in\mathbb{C}^{n\times n}$ are nondefective and let $A$ be diagonalizable as $A=V\Lambda V^{-1}$. Denote by $y$ and $x$ two $n$-dimensional vectors in $\mathbb{C}$ such that $y^{*}x\neq 0$. The generalized two-sided Rayleigh Quotient of $x$ and $y$ is then defined as the function
	\begin{equation}\nonumber
		\rho(y,x,A)=\frac{y^{*}Ax}{y^{*}Mx},\hspace{10mm}\mbox{with}\hspace{3mm}y^{*}Mx\neq 0\hspace{3mm}\mbox{and}\hspace{3mm}VV^{*}y=x.
	\end{equation}
\end{definition}
The matrix $M$ in the denominator is a Hermitian and positive definite matrix in the $H$-inner product, where $H=(VV^{*})^{-1}$. In addition, the standard Rayleigh Quotient is obtained for particular choices of $x,y$ and $M$, namely with $VV^{*}=M=I$. For what follows, we assume, that $M=I$. 

\subsection{Properties of the Generalized two-sided Rayleigh Quotient}

It is a known fact that the standard Rayleigh Quotient, $\rho(v_{r},A)$ of a normalized right eigenvector, $v_{r}$, associated with eigenvalue $\lambda_{i}$ of a matrix $A$ satisfies $\rho(v_{r},A)=\lambda_{i}$. An equivalent statement is true for the left eigenvector $v_{l}$ associated with $\lambda_{i}$, that is, $\rho(v_{l},A)=\lambda_{i}$. Equivalently, for the generalized two-sided Rayleigh Quotient, $\rho(v_{l},v_{r},A)$
\begin{lemma}\label{lem:twosidedRQ}
Let $v_{r},v_{l}\in\mathbb{C}^{n}$ be the normalized right and left eigenvectors associated with the same eigenvalue $\lambda_{i}$ of a matrix $A\in\mathbb{C}^{n\times n}$. Define $\rho(v_{l},v_{r},A)$ as the generalized two-sided Rayleigh Quotient of $v_{l}$ and $v_{r}$. Then,
	\begin{equation}\nonumber
		\rho(v_{l},v_{r},A)=\frac{v_{l}^{*}Av_{r}}{v_{l}^{*}v_{r}}=\frac{v_{l}^{*}Av_{l}}{v_{l}^{*}v_{l}}=\frac{v_{r}^{*}Av_{r}}{v_{r}^{*}v_{r}}=\lambda_{i}.
	\end{equation}
\end{lemma}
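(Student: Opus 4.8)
The plan is to establish each of the three quotients separately by invoking directly the right- and left-eigenvector relations for $\lambda_i$, since by the definition of right and left eigenvectors I have $A v_r = \lambda_i v_r$ and $v_l^* A = \lambda_i v_l^*$. These two identities carry the entire argument, and the chain of equalities decouples into two essentially one-line computations (one driven by the right relation, one by the left).

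First I would handle the two quotients that involve $A v_r$. Substituting $A v_r = \lambda_i v_r$ into the two-sided numerator gives $v_l^* A v_r = \lambda_i\, v_l^* v_r$, so the two-sided quotient collapses to $\lambda_i$ after cancelling $v_l^* v_r$; the same substitution gives $v_r^* A v_r = \lambda_i\, v_r^* v_r$, whence the right-sided quotient (the ordinary Rayleigh quotient of the eigenvector $v_r$) is also $\lambda_i$. For the remaining quotient I would instead use the left relation $v_l^* A = \lambda_i v_l^*$, which yields $v_l^* A v_l = \lambda_i\, v_l^* v_l$ and hence $\frac{v_l^* A v_l}{v_l^* v_l} = \lambda_i$. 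At this point all four displayed expressions have been matched to $\lambda_i$.

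What remains, and what I regard as the only substantive step, is to certify that the three denominators do not vanish. The two quadratic denominators are harmless: $v_l^* v_l = \|v_l\|^2$ and $v_r^* v_r = \|v_r\|^2$ are strictly positive because neither eigenvector is zero. The cross term $v_l^* v_r$ is where the hypothesis of nondefectiveness enters, and here I would appeal to Lemma \ref{lem:rleigv}: writing $v_r = V e_i$ and $v_l = V^{-*} e_i$ one obtains $v_l^* v_r = e_i^* V^{-1} V e_i = 1 \neq 0$, which is precisely the biorthogonality of left and right eigenvectors for a nondefective matrix. The same lemma also supplies the admissibility relation $VV^* v_l = v_r$ required by the definition of the generalized two-sided Rayleigh Quotient, so the pair $(v_l, v_r)$ is a legitimate argument and, with $M = I$, $\rho(v_l, v_r, A)$ coincides with the first ratio by definition. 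This closes the chain of equalities at $\lambda_i$.
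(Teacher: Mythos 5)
Your proposal is correct and follows essentially the same route as the paper's own proof: both arguments substitute the defining relations $Av_{r}=\lambda_{i}v_{r}$ and $v_{l}^{*}A=\lambda_{i}v_{l}^{*}$ into each quotient and cancel the (nonzero) denominators. The only difference is one of care: the paper merely asserts $v_{l}^{*}v_{r}\neq 0$ and that the constraint $v_{l}=(VV^{*})^{-1}v_{r}$ is superfluous, whereas you justify both via Lemma \ref{lem:rleigv} --- a worthwhile tightening, since that nonvanishing genuinely relies on the nondefectiveness hypothesis.
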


{\em Proof}.
Notice that because $v_{r}$ and $v_{l}$ are the right and left eigenvectors corresponding to $\lambda_{i}$ the constraint $v_{l}=(VV^{*})^{-1}v_{r}$ is superfluous. Moreover, $v_{l}^{*}v_{r}\neq 0$. Assume $\|v_{l}\|=\|v_{r}\|=1$, then the last two equalities follow from the definition of right and left eigenvector. For if $v_{l}^{*}A=\lambda_{i}v_{l}^{*}$ and $Av_{r}=\lambda_{i}v_{r}$ then
\begin{equation}\nonumber
	v_{l}^{*}Av_{l}=\lambda_{i}\|v_{l}\|_{2}^{2}\hspace{5mm}\mbox{and}\hspace{5mm}v_{r}^{*}Av_{r}=\lambda_{i}\|v_{r}\|_{2}^{2}.
\end{equation}
As for the second equality, 
\begin{equation}\nonumber
	\rho(v_{l},v_{r},A)=\frac{v_{l}^{*}Av_{r}}{v_{l}^{*}v_{r}}=\frac{\lambda_{i}v_{l}^{*}v_{r}}{v_{l}^{*}v_{r}}=\lambda_{i}. \qquad\endproof
\end{equation}

Given a nonzero vector $x$, the standard Rayleigh Quotient is the scalar $\rho(x,x,A)$ for which $x\perp(A-\rho(x,x,A) I)x$. Likewise, the generalized two-sided Rayleigh Quotient is the scalar $\rho(y,x,A)$ for which
\begin{equation}\nonumber
	y\perp(A-\rho(y,x,A)I)x\hspace{10mm}\mbox{and}\hspace{10mm}x\perp (A^{*}-\overline{\rho(y,x,A)}I)y.
\end{equation}
The vector $\rho(y,x,A)x$ is the oblique projection onto $x$ and orthogonally to $y$ of $Ax$ while $\overline{\rho(y,x,A)}y$ is the oblique projection onto $y$ and orthogonally to $x$ of $A^{*}y$. Moreover, recall that given a vector $x\in\mathbb{C}^{n}$ with norm one and a scalar $\mu\in\mathbb{C}$, the measure of how close $(\mu,x)$ is of being an eigenpair of $A$ is $\|r\|_{2}^{2}=\langle r,r\rangle$ where $r=Ax-\mu x$. In truth, we need only $x$, as the scalar $\mu$ minimizing $\|r\|_{2}$ is no other than $\rho(x,x,A)$ (see \cite{parlett_rayleigh_1974} or \cite{parlett_symmetric_1998}). The non-Hermitian case is slightly more involved since the left and right eigenvectors differ. Therefore, a triplet $(\mu,x,y)$ consisting of a scalar and two $n$-vectors is needed to determine $r_{y}=y^{*}A-\mu y^{*}$ and $r_{x}=Ax-\mu x$. 

Standard properties of the classic Rayleigh Quotient such as homogeneity and translation invariance are carried over, in a straightforward way, to the generalized two-sided Rayleigh Quotient:
\begin{itemize}
	\item Homogeneity: $\rho(\alpha v, \beta w, A)=\rho(v,w,A)$ and
		\begin{equation}\nonumber
			\rho(v,w,\beta A)=\frac{v^{*}\beta Aw}{v^{*}Bw}=\beta\frac{v^{*}Aw}{v^{*}Bw}=\beta\rho(v,w,A)\mbox{;}
		\end{equation}
	\item Translation Invariance: $\rho(v,w,A-\mu I)=\rho(v,w,A)-\mu$.
\end{itemize}
In addition, the \emph{boundedness} property that fails when taking $\tilde{\rho}(y,x,A)$ (see \cite{parlett_rayleigh_1974}) is satisfied for the generalized two-sided version we propose (cf. Property \ref{prop:Comp} and \ref{prop:ConvexHull}). As is an equivalent to the minimal residue property of the standard Rayleigh Quotient. This also in contrast to the approach given by Equation (\ref{eq:oldTSRQ}) without the extra constraint. In this situation, we can only guarantee that for a \emph{nonsymmetric matrix $A$ with real eigenvalues}, the quantity $\tilde{\rho}(y,x,A)$ minimizes the inner product $\langle r_{y},r_{x}\rangle$.

\begin{itemize}
\item{Minimal Inner product :} 

For $A\in\mathbb{R}^{n\times n}$ with real eigenvalues and given $x,y\in\mathbb{R}^{n}$ such that $y^{*}x\neq 0$, for any scalar $\rho$
	\begin{equation}\nonumber
		(A^{*}y-\mu y^{*})^{*}(Ax-\mu x)\geq y^{*}A^{2}x-\rho^{2}y^{*}x,
	\end{equation}
 with equality only when $\mu=\rho=\rho(y,x,A)$.
\end{itemize}
\begin{proof}
	Set $\tilde{\rho}=\tilde{\rho}(y,x,A)$.
	\begin{eqnarray}\nonumber
		(A^{*}y-\mu y^{*})^{*}(Ax-\mu x)&=&		y^{*}AAx-\mu y^{*}Ax-\mu y^{*}Ax-\mu^{2}y^{*}x \nonumber \\
									&=&		y^{*}x\left( y^{*}A^{2}x/y^{*}x +(\mu-\rho)^{2}-\rho^{2} \right)\nonumber \\
									&\geq & y^{*}A^{2}x -\rho^{2}y^{*}x, \nonumber
	\end{eqnarray}
	with equality only when $\mu=\rho$.
\end{proof}

If, however, the extra constraint ($y=(VV^{*})^{-1}x$) is used 
\begin{lemma}[Minimal residue norm]
	Let $A=V\Lambda V^{-1}$ and $H=(VV^{*})^{-1}$. Given $u,v\neq 0$ and for any scalar $\rho$
	\begin{equation}\nonumber
		\|Au-\mu u\|_{H}^{2}\geq \|Au\|_{H}^{2}-\|\rho u\|_{H}^{2}\hspace{5mm}\mbox{and}\hspace{5mm}\|v^{*}A-\mu v^{*}\|_{H}^{2}\geq \|v^{*}A\|_{H}^{2}-\|\rho v^{*}\|_{H}^{2}
	\end{equation}
	with equality only when $\mu=\rho=\rho(y,x,A)$.
\end{lemma}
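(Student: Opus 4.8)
The plan is to recognise both inequalities as the elementary minimal-residual identity for a Rayleigh quotient, carried out in the inner product induced by $H=(VV^{*})^{-1}$. Throughout I write $\langle a,b\rangle_{H}=b^{*}Ha$ and $\|a\|_{H}^{2}=a^{*}Ha$, so that the constraint $VV^{*}y=x$ in the definition of $\rho(y,x,A)$ reads $y=Hx$. With $M=I$ this gives at once the identification
\[
\rho(y,x,A)=\frac{y^{*}Ax}{y^{*}x}=\frac{x^{*}HAx}{x^{*}Hx}=\frac{\langle Ax,x\rangle_{H}}{\langle x,x\rangle_{H}},
\]
that is, the two-sided Rayleigh quotient is exactly the $H$-Rayleigh quotient of the right vector $x$. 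Setting $u=x$, the scalar in the statement is therefore $\rho=\langle Au,u\rangle_{H}/\|u\|_{H}^{2}$.

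For the first inequality I would expand $\|Au-\mu u\|_{H}^{2}$ by sesquilinearity and complete the square in $\mu$. Using $\rho\|u\|_{H}^{2}=\langle Au,u\rangle_{H}$ together with $\overline{\rho}\,\|u\|_{H}^{2}=\overline{\langle Au,u\rangle_{H}}=\langle u,Au\rangle_{H}$ to cancel the cross terms, one obtains the exact identity
\[
\|Au-\mu u\|_{H}^{2}=\|u\|_{H}^{2}\,|\mu-\rho|^{2}+\bigl(\|Au\|_{H}^{2}-|\rho|^{2}\|u\|_{H}^{2}\bigr).
\]
Since $H$ is positive definite (shown earlier), $\|u\|_{H}^{2}>0$ and $|\mu-\rho|^{2}\ge 0$, so the bracketed constant $\|Au\|_{H}^{2}-\|\rho u\|_{H}^{2}$ is a lower bound attained precisely when $\mu=\rho$. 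This is the first claim and it pins down the equality case as $\mu=\rho=\rho(y,x,A)$; note that no self-adjointness of $A$ is needed, only the positive definiteness of $H$.

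The second inequality I would obtain by duality rather than by repeating the algebra. Transposing the left residual gives $(v^{*}A-\mu v^{*})^{*}=A^{*}v-\overline{\mu}v$, a \emph{right} residual for $A^{*}$. From $A=V\Lambda V^{-1}$ one has $A^{*}=W\Lambda^{*}W^{-1}$ with $W=V^{-*}$, and a short computation gives $W^{*}=V^{-1}$, whence $(WW^{*})^{-1}=VV^{*}=H^{-1}$; thus the product that plays for $A^{*}$ the role $H$ plays for $A$ is the dual product $\langle\cdot,\cdot\rangle_{H^{-1}}$. Interpreting the $H$-norm of a row vector as $\|w^{*}\|_{H}^{2}=w^{*}H^{-1}w$, one checks $\|v^{*}A-\mu v^{*}\|_{H}^{2}=\|A^{*}v-\overline{\mu}v\|_{H^{-1}}^{2}$, so applying the identity above to the pair $(A^{*},H^{-1})$ yields the second inequality, with equality when $\overline{\mu}$ equals the $H^{-1}$-Rayleigh quotient of $v$. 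At a left/right eigenpair this conjugate coincides with $\lambda_{i}=\rho(y,x,A)$ by Lemma~\ref{lem:twosidedRQ}.

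The completion of the square is routine; the step I would treat as the main obstacle is fixing the norm conventions for the left (row-vector) residual so that the dual product $H^{-1}$ genuinely appears and the associated Rayleigh quotient reduces to the same eigenvalue $\rho(y,x,A)$. Once the bookkeeping $y=Hx$, $W=V^{-*}$, $(WW^{*})^{-1}=H^{-1}$ is in place, both inequalities are the same one-line identity read in two mutually dual inner products.
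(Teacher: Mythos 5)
Your proof is correct, and for the first inequality it is precisely the paper's own argument: expand $\|Au-\mu u\|_{H}^{2}$, complete the square using $\rho\,u^{*}Hu=u^{*}HAu$ (the $H$-Rayleigh quotient of $u$, which is $\rho(y,x,A)$ for $y=Hu$), and invoke positive definiteness of $H$ to get the bound with equality exactly at $\mu=\rho$. Where you genuinely depart from the paper is the second inequality. The paper dismisses it in one sentence (``we replace $r_{u}$ in the proof by $r_{v}=v^{*}A-\mu v^{*}$''), never saying which norm a row vector carries; if one repeats the algebra literally in the same $H$-inner product, the completed square places the minimizer at $v^{*}AHv/(v^{*}Hv)$, which is \emph{not} $\rho(y,x,A)$ for the pair $(x,y)=(H^{-1}v,v)$ in general. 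Your duality route --- transposing the left residual into a right residual for $A^{*}$, computing $W=V^{-*}$, $W^{*}=V^{-1}$, $(WW^{*})^{-1}=VV^{*}=H^{-1}$, and concluding that row vectors must carry the dual norm $\|w^{*}\|_{H}^{2}=w^{*}H^{-1}w$ --- resolves exactly this ambiguity: under that convention the minimizer is $\overline{(v^{*}H^{-1}A^{*}v)/(v^{*}H^{-1}v)}=v^{*}AH^{-1}v/(v^{*}H^{-1}v)=\rho(v,H^{-1}v,A)$, which is what the lemma asserts. So your argument is not just a valid alternative; it supplies the precision the paper's one-line reduction lacks. One small sharpening: the conjugate identification in your last step holds for \emph{every} $v\neq 0$ (one line, using that $H^{-1}$ is Hermitian), not only ``at a left/right eigenpair'' as you write; you need, and in fact already have, this general version to obtain the equality case of the lemma for arbitrary $v$.
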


\begin{proof}
	Set $\rho=\rho(y,x,A)$, $H=(VV^{*})^{-1}$ and $r_{u}=Au-\mu u$ and recall that $H$ is a Hermitian positive definite matrix.
	\begin{eqnarray}
		\|r_{u}\|^{2}_{H}&=&(HAu-\mu Hu)^{*}(Au-\mu u)\nonumber \\
										&=& u^{*}A^{*}HAu-\bar{\mu}u^{*}HAu-\mu u^{*}A^{*}Hu+\bar{\mu}\mu u^{*}Hu \nonumber \\
										&=& (u^{*}Hu)\left[ (u^{*}A^{*}HAu)/(u^{*}Hu) +(\mu - \rho)(\bar{\mu}- \bar{\rho})-\rho\bar{\rho} \right] \nonumber \\
										&\geq & \|Au\|^{2}_{H}-\|\rho u\|^{2}_{H} \nonumber
	\end{eqnarray}
where $\|z\|^{2}_{H}=\langle z,z \rangle_{H}=\langle z,Hz\rangle$ for any $z\in\mathbb{C}^{n}$.  We replace $r_{u}$ in the proof by $r_{v}=v^{*}A-\mu v^{*}$ to obtain the second part of the statement.
\end{proof}

Because the range of $\rho(y,x,A)$ is a subset of the range of $\tilde{\rho}(y,x,A)$ we cite a result from B.N. Parlett \cite{parlett_rayleigh_1974} for the \emph{stationarity} property in the form of a lemma.
\begin{lemma}
The generalized two-sided Rayleigh Quotient, $\rho(y,x,A)$, is stationary if and only if $y$ and $x$ are the left and right eigenvectors of $A$ associated with eigenvalue $\rho(y,x,A)$ and $y^{*}x\neq 0$.
\end{lemma}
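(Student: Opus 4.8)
The plan is to eliminate the constraint and reduce the two-variable quotient to a single-variable one, where stationarity can be read off from the vanishing of the complex gradient. Writing $H=(VV^{*})^{-1}$, the constraint $VV^{*}y=x$ is the explicit relation $y=Hx$, so on the constraint set $\rho(y,x,A)$ is a function of $x$ alone, namely the Rayleigh Quotient of $A$ in the $H$-inner product,
\[
	\rho(x)=\frac{(Hx)^{*}Ax}{(Hx)^{*}x}=\frac{x^{*}HAx}{x^{*}Hx}.
\]
Because the constraint set $\{(Hx,x):x\in\mathbb{C}^{n}\}$ is the graph of a linear map, it is a smooth manifold diffeomorphic to $\mathbb{C}^{n}$, and the constrained stationary points of $\rho(y,x,A)$ correspond exactly to the stationary points of the reduced function $\rho(x)$. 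This reflects the observation recorded before the statement: because of the added constraint, the range of $\rho(y,x,A)$ is a subset of that of $\tilde{\rho}(y,x,A)$, the extra condition merely restricting the variables rather than introducing new stationary behaviour.

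Since $\rho(x)$ is complex-valued and not holomorphic, it is stationary precisely when both Wirtinger gradients $\partial\rho/\partial\bar{x}$ and $\partial\rho/\partial x$ vanish. A direct computation, using that $H$ is Hermitian, gives
\[
	\frac{\partial\rho}{\partial\bar{x}}=\frac{H(A-\rho I)x}{x^{*}Hx}\hspace{10mm}\mbox{and}\hspace{10mm}\frac{\partial\rho}{\partial x}=\frac{(Hx)^{*}(A-\rho I)}{x^{*}Hx}.
\]
As $H$ is nonsingular, the first vanishes if and only if $Ax=\rho x$, and the second if and only if $(Hx)^{*}A=\rho(Hx)^{*}$, that is $y^{*}A=\rho y^{*}$ with $y=Hx$. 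Thus a stationary point forces $x$ to be a right eigenvector and $y$ a left eigenvector of $A$ for the common eigenvalue $\rho=\rho(y,x,A)$; moreover $y^{*}x=x^{*}Hx>0$ holds automatically, since $H$ is positive definite and $x\neq 0$, which supplies the asserted condition $y^{*}x\neq 0$.

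For the converse, suppose $x$ and $y$ are the right and left eigenvectors of $A$ associated with an eigenvalue $\lambda$. By Lemma \ref{lem:rleigv} they already satisfy $x=VV^{*}y$, so the pair lies on the constraint set, and by Lemma \ref{lem:twosidedRQ} we have $\rho(y,x,A)=\lambda$. Substituting $Ax=\lambda x$, $y^{*}A=\lambda y^{*}$ and $\rho=\lambda$ into the two gradient formulas above makes both numerators vanish, so $\rho$ is stationary, completing the equivalence. The main obstacle is the careful treatment of the complex, non-holomorphic differentiation---in particular justifying that stationarity of the complex-valued $\rho$ is equivalent to the simultaneous vanishing of both Wirtinger derivatives, and that eliminating the constraint through the graph $y=Hx$ faithfully reproduces the constrained stationarity rather than discarding critical points. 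Once this bookkeeping is in place, the remaining algebra is routine.
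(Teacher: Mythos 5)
Your proof is correct, but it takes a genuinely different route from the paper: the paper does not prove this lemma at all --- it simply cites Parlett (\S 11 of his 1974 Rayleigh quotient paper), whose result concerns stationarity of the \emph{unconstrained} two-sided quotient $\tilde{\rho}(y,x,A)=y^{*}Ax/y^{*}x$ under independent perturbations of $y$ and $x$, and the paper transfers it to the constrained $\rho(y,x,A)$ only via the loose remark that the range of $\rho$ is a subset of the range of $\tilde{\rho}$. That transfer is not automatic: restricting a function to a submanifold preserves existing stationary points but can in principle create new ones, so Parlett's characterization does not by itself give the ``only if'' direction for the constrained quotient. Your reduction to the $H$-inner-product Rayleigh quotient $x^{*}HAx/x^{*}Hx$ followed by the Wirtinger-derivative computation closes exactly this gap: it shows directly that the restricted function has no stationary points other than right eigenvectors $x$, for which $y=Hx$ is then automatically the matching left eigenvector (consistent with Lemma \ref{lem:rleigv}) and $y^{*}x=x^{*}Hx>0$ holds for free. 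In this respect your argument is self-contained and more rigorous than the paper's citation; what it does not give --- and the lemma as stated does not require --- is Parlett's stronger two-variable statement about independent variations of $y$ and $x$. Two minor points: first, your passing remark that the range-inclusion observation ``reflects'' the absence of new stationary behaviour is not itself an argument (range inclusion implies nothing about stationary points); it is your gradient computation, not that remark, that does the work. Second, in the converse direction, left and right eigenvectors are each determined only up to scalar multiples, so ``they already satisfy $x=VV^{*}y$'' holds only after the appropriate normalization of Lemma \ref{lem:rleigv}; this is the same looseness present in the paper, and by homogeneity of the quotient it is harmless.
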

\begin{proof}
(see \cite[\S 11]{parlett_rayleigh_1974}).
\end{proof}

\subsection{Extrema of the generalized two sided Rayleigh Quotient}

Similar to the normal case, the extrema of the generalized two-sided Rayleigh quotient are the largest and the smallest eigenvalues of those non-Hermitian matrices whose eigenvalues are real. This is the topic of the next theorem whose Hermitian version is attributed to Rayleigh and Ritz.
\begin{theorem}\label{thm:RayRitz}
	Let $A\in\mathbb{C}^{n\times n}$ be a non-Hermitian, nondefective matrix. Let $\Lambda=V^{-1}AV$ be the diagonal matrix of eigenvalues of $A$ and assumed to be real and ordered as $\lambda_{1}\leq\ldots\leq\lambda_{n}$. Then,
	\begin{align}
		&\lambda_{1}y^{*}x\leq y^{*}Ax\leq \lambda_{n}y^{*}x\hspace{5mm}\mbox{for all}\hspace{5mm}x\in\mathbb{C}^{n}\hspace{5mm}\mbox{and}\hspace{5mm}y=(VV^{*})^{-1}x\\ 
		&\lambda_{max}=\lambda_{n}=\mathop{\max_{y^{*}x\neq 0}}_{y=(VV^{*})^{-1}x} \frac{y^{*}Ax}{y^{*}x}=\mathop{\max_{y^{*}x=1}}_{y^{*}=(VV^{*})^{-1}x}y^{*}Ax\\
		&\lambda_{min}=\lambda_{1}=\mathop{\min_{y^{*}x\neq 0}}_{y=(VV^{*})^{-1}x} \frac{y^{*}Ax}{y^{*}x}=\mathop{\min_{y^{*}x=1}}_{y^{*}=(VV^{*})^{-1}x}y^{*}Ax.
	\end{align}
	
\end{theorem}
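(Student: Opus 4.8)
The plan is to read the entire statement off Property \ref{prop:equivalence}, which already does the heavy lifting: for $A = V\Lambda V^{-1}$ one has $G(A) = F(\Lambda)$, and when $\sigma(A)$ is real the diagonal matrix $\Lambda = \mathrm{diag}(\lambda_1,\ldots,\lambda_n)$ is Hermitian, so its classical field of values is the real interval $[\lambda_1,\lambda_n]$. Thus the problem reduces to the ordinary Rayleigh--Ritz theorem for the Hermitian matrix $\Lambda$. Concretely, for any $x \neq 0$ I would set $z = V^{-1}x$; the constraint $y = (VV^*)^{-1}x$ together with $A = V\Lambda V^{-1}$ gives, exactly as in the proof of Property \ref{prop:equivalence},
\begin{equation}\nonumber
	y^*Ax = z^*\Lambda z \qquad \text{and} \qquad y^*x = z^*z,
\end{equation}
both of which are real precisely because $\Lambda$ is real. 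So the generalized two-sided Rayleigh quotient at $(y,x)$ is nothing but the standard Rayleigh quotient of $\Lambda$ at $z$.

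For the two-sided inequality (the first displayed line) I would expand numerator and denominator as in the proof of Property \ref{prop:ConvexHull},
\begin{equation}\nonumber
	y^*Ax = z^*\Lambda z = \sum_{i=1}^n |z_i|^2\,\lambda_i, \qquad y^*x = z^*z = \sum_{i=1}^n |z_i|^2 .
\end{equation}
Since $\lambda_1 \le \lambda_i \le \lambda_n$ for every $i$ and each $|z_i|^2 \ge 0$, multiplying through and summing yields $\lambda_1(z^*z) \le z^*\Lambda z \le \lambda_n(z^*z)$, which is exactly $\lambda_1 y^*x \le y^*Ax \le \lambda_n y^*x$. Because $z = V^{-1}x$ is a bijection of $\mathbb{C}^n$, this holds for \emph{all} admissible $x$.

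For the extremal characterizations (the second and third lines) I would note that $y^*x = z^*z > 0$ whenever $x \neq 0$, so $y^*x \neq 0$ is equivalent to $z \neq 0$ and the normalization $y^*x = 1$ to $z^*z = 1$. Dividing the previous inequality by $z^*z$ confines the quotient to $[\lambda_1,\lambda_n]$; the upper value $\lambda_n$ is attained at $z = e_n$, that is at $x = Ve_n$ (the right eigenvector for $\lambda_n$) paired with $y = V^{-*}e_n$ (the matching left eigenvector), by Lemma \ref{lem:rleigv}. This simultaneously shows the supremum equals $\lambda_n$ and is achieved, so it is a maximum; the minimum at $\lambda_1$ via $z = e_1$ is identical. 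The equality of the quotient form and the normalized form then follows from the homogeneity property established earlier.

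The routine content is therefore entirely inherited from the Hermitian theory, and I do not expect a serious obstacle. The only two points genuinely requiring care are (i) recording that the reality of $\sigma(A)$ is exactly what makes $z^*\Lambda z$ real and the ordering $\lambda_1 \le \cdots \le \lambda_n$ meaningful in the first place, and (ii) the bookkeeping that carries the optimizing coordinate vectors $e_1, e_n$ back through $V$ into an explicit eigenvector pair satisfying the constraint $y = (VV^*)^{-1}x$, so that the extrema are exhibited rather than merely bounded. Both are light, and the substance of the theorem is carried by Property \ref{prop:equivalence}.
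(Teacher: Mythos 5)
Your proposal is correct and follows essentially the same route as the paper's own proof: the paper likewise substitutes $z=V^{-1}x$, writes $y^{*}Ax=\sum_{i}\lambda_{i}|(V^{-1}x)_{i}|^{2}$ and $y^{*}x=z^{*}z$, bounds the sum by $\lambda_{1}$ and $\lambda_{n}$, and exhibits attainment at the left/right eigenvector pairs. The only cosmetic difference is that you invoke Property \ref{prop:equivalence} by name while the paper redoes that computation inline, and the paper cites Lemma \ref{lem:twosidedRQ} for the equality case where you construct the optimizers explicitly from Lemma \ref{lem:rleigv}.
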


{\em Proof}.
	From Lemma \ref{lem:rleigv} we know that if $v_{l}$ and $v_{r}$ are the left and right eigenvector of $A$ corresponding to the eigenvalue $\lambda_{i}$, then $(v_{l}^{*}V)^{*}=V^{-1}v_{r}=e_{i}$ or equivalently, $v_{l}=(VV^{*})^{-1}v_{r}$. Now, for any $x\in\mathbb{C}^{n}$ and $y=(VV^{*})^{-1}x$ 
	\begin{equation}\nonumber
		y^{*}Ax=y^{*}V\Lambda V^{-1}x=x^{*}(V^{-1})^{*}\Lambda V^{-1}x=\sum_{i=1}^{n}\lambda_{i}|(V^{-1}x)_{i}|^{2}.
	\end{equation}
	Because each term $|(V^{-1}x)_{i}|^{2}$ is nonnegative, this is a convex combination of the real numbers $\lambda_{i}$ for $i=1,\ldots,n$. Therefore
	\begin{equation}\nonumber
		\lambda_{min}\sum_{i=1}^{n}|(V^{-1}x)_{i}|^{2}\leq y^{*}Ax=\sum_{i=1}^{n}\lambda_{i}|(V^{-1}x)_{i}|^{2}\leq\lambda_{max}\sum_{i=1}^{n}|(V^{-1}x)_{i}|^{2}.
	\end{equation}
	For $z=V^{-1}x$ notice that $\displaystyle\sum_{i=1}^{n}|(V^{-1}x)_{i}|^{2}=\|z\|^{2}=z^{*}z=y^{*}x$. Consequently,
	\begin{equation}\nonumber
		\lambda_{1}y^{*}x\leq y^{*}Ax\leq \lambda_{n}y^{*}x.
	\end{equation}
Equality occurs when $y$ and $x$ are the right and left eigenvalues of $A$ corresponding to $\lambda_{1}$ or $\lambda_{n}$ as appropriate (see Lemma \ref{lem:twosidedRQ}). In other words,
	\begin{equation}\nonumber
		\mathop{\min_{y^{*}x\neq 0}}_{y=(VV^{*})^{-1}x}\frac{y^{*}Ax}{y^{*}x}=\lambda_{1}\hspace{10mm}\mbox{and}\hspace{10mm}
		\mathop{\max_{y^{*}x\neq 0}}_{y=(VV^{*})^{-1}x}\frac{y^{*}Ax}{y^{*}x}=\lambda_{n}.
	\end{equation}
	In addition $y$ and $x$ can be normalized so that $y^{*}x=1$ resulting in
	\begin{equation}\nonumber
		\mathop{\max_{y^{*}x=1}}_{y=(VV^{*})^{-1}x}y^{*}Ax=\lambda_{n}\hspace{10mm}\mbox{and}\hspace{10mm}\mathop{\min_{y^{*}x=1}}_{y=(VV^{*})^{-1}x}y^{*}Ax=\lambda_{1}.\qquad\endproof
	\end{equation}

Two additional notes are now called for. One to say that for $z=V^{-1}x$ and $\Lambda=V^{-1}AV$
\begin{equation}\nonumber
	\mathop{\max_{y^{*}x=1}}_{y=(VV^{*})^{-1}x}y^{*}Ax=\max_{\|z\|=1}z^{*}\Lambda z
\end{equation}
Therefore, by symmetry of $\Lambda $, what was just developed for $x$ can in a similar manner be done for $y$ by setting $x=VV^{*}y$. The second to draw attention to the fact that
\begin{equation}\nonumber
\mathop{\max_{y^{*}x\neq 0}}_{y=(VV^{*})^{-1}x}\frac{y^{*}Ax}{y^{*}x}\hspace{2mm}\leq\hspace{2mm}\max_{y^{*}x\neq 0}\frac{y^{*}Ax}{y^{*}x}\hspace{8mm}\mbox{and}\hspace{8mm}\mathop{\min_{y^{*}x\neq 0}}_{y=(VV^{*})^{-1}x}\frac{y^{*}Ax}{y^{*}x}\hspace{2mm}\geq\hspace{2mm}\min_{y^{*}x\neq 0}\frac{y^{*}Ax}{y^{*}x}
\end{equation}
as a result of the extra constraint on the left-hand side.

The extra restriction to the set over which the maximum is taken, can be seen as the nonnormal equivalent of the condition $y=x$, since in the normal case, $V$ is orthogonal and $VV^{*}=I$. Unfortunately the price to pay for nonnormality is high, rendering limited practical use to the previous results. The matrix $V$ is, in general, not known and if otherwise there would no longer be the need for determining right and left eigenvectors. In theoretical terms, however, it allows for the generalization of the variational characterization of the eigenvalues to non-Hermitian matrices with real eigenvalues. We are now able to generalize Courant-Fischer minimax theorem to real non-Hermitian matrices with real eigenvalues.

\begin{theorem}\label{thm:CFnonHermitian}
	Let $j$ and $n$ be integers such that $1\leq j\leq n$ and let $A\in\mathbb{C}^{n\times n}$ be a nondefective matrix. Assume the eigenvalues of $A$ to be real and ordered as $\lambda_{1}\leq\ldots\leq\lambda_{n}$. Let $\textbf{S}^{j}$ denote a $j$-dimensional subspace of $\mathbb{C}^{n}$. Then, 
	\begin{equation}\label{eq:minmax}
		\lambda_{j}=\min_{\textbf{S}^{j}}\mathop{\mathop{\max_{x\in\textbf{S}^{j}}}_{y=(VV^{*})^{-1}x}}_{y^{*}x\neq 0}\frac{y^{*}Ax}{y^{*}x}=\max_{\textbf{S}^{n-j+1}}\mathop{\mathop{\min_{q\in \textbf{S}^{n-j+1}}}_{p=(VV^{*})^{-1}q}}_{p^{*}q\neq 0}\frac{p^{*}Aq}{p^{*}q}
	\end{equation}
 \end{theorem}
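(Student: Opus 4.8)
The plan is to reduce the generalized two-sided quotient to an ordinary Rayleigh quotient of the real diagonal matrix $\Lambda$ and then invoke the classical Courant--Fischer theorem, which applies because $\Lambda$ is Hermitian. The engine is exactly the change of variables used in the proof of Theorem \ref{thm:RayRitz}: writing $z=V^{-1}x$ and imposing $y=(VV^{*})^{-1}x$, one has $y^{*}Ax=z^{*}\Lambda z$ and $y^{*}x=z^{*}z$, so that
\begin{equation}\nonumber
	\frac{y^{*}Ax}{y^{*}x}=\frac{z^{*}\Lambda z}{z^{*}z}
\end{equation}
whenever $x\neq 0$ (equivalently $z\neq 0$, equivalently $y^{*}x\neq 0$). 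Thus the constraint $y=(VV^{*})^{-1}x$ merely fixes $y$ in terms of $x$ and imposes no restriction on $x$ beyond $x\neq 0$.

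First I would handle the subspace bookkeeping. Since $V^{-1}$ is invertible, the map $x\mapsto z=V^{-1}x$ carries each $j$-dimensional subspace $\textbf{S}^{j}$ of $\mathbb{C}^{n}$ bijectively onto a $j$-dimensional subspace $\mathcal{T}^{j}=V^{-1}\textbf{S}^{j}$, and as $\textbf{S}^{j}$ ranges over all $j$-dimensional subspaces so does $\mathcal{T}^{j}$. Consequently the inner maximisation over $x\in\textbf{S}^{j}$ (with $y$ determined and $y^{*}x\neq0$) equals the maximisation of $z^{*}\Lambda z/z^{*}z$ over $0\neq z\in\mathcal{T}^{j}$, and the outer minimisation over $\textbf{S}^{j}$ becomes the minimisation over all $\mathcal{T}^{j}$.

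Next I would apply the classical Courant--Fischer theorem to $\Lambda$. Because $\Lambda=\mathrm{diag}(\lambda_{1},\dots,\lambda_{n})$ is a real diagonal, hence Hermitian, matrix with eigenvalues $\lambda_{1}\le\dots\le\lambda_{n}$, the standard minimax identity gives
\begin{equation}\nonumber
	\lambda_{j}=\min_{\mathcal{T}^{j}}\max_{0\neq z\in\mathcal{T}^{j}}\frac{z^{*}\Lambda z}{z^{*}z}=\max_{\mathcal{T}^{n-j+1}}\min_{0\neq z\in\mathcal{T}^{n-j+1}}\frac{z^{*}\Lambda z}{z^{*}z}.
\end{equation}
Combining this with the change of variables above (using $w=V^{-1}q$ and $p=(VV^{*})^{-1}q$ for the right-hand, max--min expression) yields (\ref{eq:minmax}).

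The only genuinely delicate point is the subspace correspondence: one must verify that the constraint $y=(VV^{*})^{-1}x$ does not surreptitiously restrict $x$ within $\textbf{S}^{j}$ and that $V^{-1}$ maps $j$-dimensional subspaces bijectively onto $j$-dimensional subspaces. Both are immediate from the invertibility of $V$, so no real obstacle arises; the substance of the theorem is carried entirely by the identity $y^{*}Ax/y^{*}x=z^{*}\Lambda z/z^{*}z$ together with the Hermiticity of $\Lambda$, after which the classical Courant--Fischer theorem applies verbatim.
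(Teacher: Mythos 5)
Your proposal is correct and takes essentially the same route as the paper: both rest on the change of variables $z=V^{-1}x$ under the constraint $y=(VV^{*})^{-1}x$, which turns $y^{*}Ax/y^{*}x$ into $z^{*}\Lambda z/z^{*}z$ and reduces the whole statement to the Hermitian diagonal matrix $\Lambda$. The only difference is that you then cite the classical Courant--Fischer theorem for $\Lambda$ after making the subspace correspondence $\textbf{S}^{j}\mapsto V^{-1}\textbf{S}^{j}$ explicit, whereas the paper re-derives the minimax identity inline via the standard subspace-intersection argument (nontrivial intersection of $j$- and $(n-j+1)$-dimensional subspaces, then equality at the eigenvector spans); your bookkeeping of the subspace bijection is, if anything, more careful than the paper's.
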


{\em Proof}.
		We have shown earlier that with the variable transformation $z=V^{-1}x$
	\begin{equation}\nonumber
		\rho(y,x,A)=\mathop{\max_{y^{*}x\neq 0}}_{y=(VV^{*})^{-1}x} \frac{y^{*}Ax}{y^{*}x}=\max_{z\neq 0}\frac{z^{*}\Lambda z}{z^{*}z}=\rho(z,z,\Lambda).
 	\end{equation}
By the basis theorem and because the columns of $V$ form a basis for $\mathbb{C}^{n}$, though not orthogonal, $\tilde{\textbf{S}}^{j}\cap \tilde{\textbf{S}}^{n-j+1}\neq 0$. There exist, thus, a vector $z\in \tilde{\textbf{S}}^{j}\cap \tilde{\textbf{S}}^{n-j+1}$ for which
\begin{equation}\nonumber
	\min_{v\in\tilde{\textbf{S}}^{n-j+1}}\rho(v,v,\lambda)\leq\rho(w,w,\Lambda)\leq\max_{u\in\tilde{\textbf{S}}^{j}}\rho(u,u,\Lambda).
\end{equation}
Because the inequalities are valid for all choices of $\tilde{\textbf{S}}^{j}$ and $\tilde{\textbf{S}}^{n-j+1}$ we have
	\begin{equation}\nonumber
		\max_{\tilde{\textbf{S}}^{n-j+1}}\min_{v\in\tilde{\textbf{S}}^{n-j+1}}\rho(v,v,\Lambda)\leq\min_{\tilde{\textbf{S}}^{j}}\max_{u\in\tilde{\textbf{S}}^{j}}\rho(u,u,\Lambda).
	\end{equation}
In the basis of the original matrix this means
	\begin{equation}\nonumber
		\max_{\textbf{S}^{n-j+1}}\mathop{\min_{q\in \textbf{S}^{n-j+1}}}_{p=(VV^{*})^{-1}q}\rho(p,q,A)\leq\min_{\textbf{S}^{j}}\mathop{\max_{x\in \textbf{S}^{j}}}_{y=(VV^{*})^{-1}x}\rho(y,x,A).
	\end{equation}
Equality follows from taking $\textbf{S}^{j}=\mathcal{V}^{j}$ the span of the first $j$ (right) eigenvectors and $\textbf{S}^{n-j+1}$ the span of the last $n-j+1$ (right) eigenvectors, rendering 
	\begin{equation}\nonumber
		\lambda_{j}=\max_{\textbf{S}^{n-j+1}}\mathop{\min_{q\in \textbf{S}^{n-j+1}}}_{p=(VV^{*})^{-1}q}\rho(p,q,A)\hspace{10mm}\mbox{and}\hspace{10mm}\lambda_{j}=\min_{\textbf{S}^{j}}\mathop{\max_{x\in \textbf{S}^{j}}}_{y=(VV^{*})^{-1}x}\rho(y,x,A).\qquad\endproof
	\end{equation}

\section*{Acknowledgments}
The author wishes to thank Jan Brandts for his valuable comments on an early version of this paper.

\bibliography{FoVRRSilva}{}
\bibliographystyle{plain}
\end{document}